\tikzset{%
  every neuron/.style={
    circle,
    draw,
    minimum size=1cm
  },
  neuron missing/.style={
    draw=none, 
    scale=4,
    text height=0.333cm,
    execute at begin node=\color{black}$\vdots$
  },
}
\newcommand{\R}{\mathbb{R}}
\newcommand{\W}{\mathcal{W}}
\newcommand{\A}{\mathcal{A}}
\newcommand{\T}{\mathbb{T}}
\newcommand{\C}{\mathbb{C}}
\newcommand{\conv}{\text{conv}}
\newcommand{\V}{\mathcal{V}}
\newcommand{\rec}{\text{rec\,}}
\newcommand{\leg}{\langle}
\newcommand{\rig}{\rangle}
\newcommand{\vp}{\varphi}
\newcommand{\bs}{\backslash}
\newcommand{\mComp}{\texttt{mComp}}
\newcommand{\fComp}{\texttt{fComp}}
\newcommand{\reddot}{\tikz\draw[red,fill=red] (0,0) circle (.4ex);}
\newcommand{\bluedot}{\tikz\draw[blue,fill=blue] (0,0) circle (.4ex);}
\newcommand{\blackdot}{\tikz\draw[black,fill=black] (0,0) circle (.4ex);}
\newtheorem{thm}{Theorem}
\newtheorem{prop}[thm]{Proposition}
\newtheorem{cor}[thm]{Corollary}
\newtheorem{lemma}[thm]{Lemma}
\theoremstyle{definition}
\newtheorem{definition}{Definition}
\newtheorem{opq}{Open Question}
\newtheorem{rmk}{Remark}
\theoremstyle{definition}
\newtheorem{ex}{Example}
\newcommand{\jayden}[1]{\textcolor{teal!60!white}{[#1]}}
\title{Minimal Representations of Tropical Rational Functions}
\author{Ngoc M. Tran\thanks{Department of Mathematics, University of Texas-Austin, Austin, Texas, 78712 USA. Email: Ngoc M. Tran (ntran@math.utexas.edu), Jidong Wang (jidongw@utexas.edu)}
\and Jidong Wang\footnotemark[1]}
\date{}
\begin{document}

\maketitle

\begin{abstract}
{Tropical geometry sheds new light on analyzing classical statistical models of piecewise-linear nature. Representing a piecewise-linear function as a tropical rational function, many nontrivial results can be obtained. This paper studies the minimal ones of the above representation. We give two natural notions of complexity for tropical rational functions, the monomial complexity and the factorization complexity. We show that in dimension one, both notions coincide, but this is not true in higher dimensions. We give a canonical representation that is minimal for conewise linear functions on $\R^2$, which ties to the question of finding canonical representatives for virtual polytopes. We also give comparison bounds between the two notions of complexity. As a proof step, we obtain counting formulas and lower bounds for the number of regions in an arrangement of tropical hypersurfaces, giving a small extension for a result by Mont\'{u}far, Ren and Zhang. It also gives a lower bound on the number of vertices in a regular mixed subdivision of a Minkowski sum, giving a small extension for Adiprasito's Lower Bound Theorem for Minkowski sums. We also show that any piecewise-linear function is a linear combination of conewise linear function, which may have implications for model choice for multivariate linear spline regression.}

{\bf Keywords:} Arrangements of tropical hypersurfaces, Minkowski sums, tropical rational functions, mixed subdivisions, tropical methods in statistics, virtual polytopes \\
\end{abstract}

\section{Introduction}

Continuous piecewise-linear (PL) functions are an important class of functions featured in machine learning, statistics, and optimization. Training a deep ReLU network with a fixed architecture, for instance, can be understood as an optimization problem over a subset of such functions. Used as regression models, PL functions have good interpretability, while capturing global non-linearity. There are many ways to represent a PL function, such as max-min representation, nested absolute value representation, and tropical rational function representation, etc. \cite{ovchinnikov2002max,melzer1986expressibility,kripfganz1987piecewise,lin1994generalization,}. The tropical rational function (TRF) representation writes a PL function as a difference of two max-affine function, which is formally a rational function over the tropical algebra $\T$, hence the name. Recall that $\T=(\R,\oplus,\odot,\oslash)$ where $a\oplus b = \max\{a,b\},a\odot b = a+b$ and $a\oslash b=a-b$. Any convex PL function $g:\R^d\to \R$ can be written as a tropical polynomial, and any continuous PL function $\vp:\R^d\to \R$ can be written as a tropical rational function:
\begin{gather}
    g(x) = a_1\odot x^{\odot b_1}\oplus \cdots \oplus a_n\odot x^{\odot b_n}:=\max\{ \leg b_1,x \rig + a_1 ,\cdots, \leg b_n,x \rig + a_n\}, \\
    \vp(x)=g(x)\oslash h(x) :=\max\{ \leg b_1,x \rig + a_1 ,\cdots, \leg b_n,x \rig + a_n\} - \max\{ \leg d_1,x \rig + c_1 ,\cdots, \leg d_m,x \rig + c_m\},
\end{gather}
for some $a_i,c_i\in \R,b_i,d_i\in \R^d$. We emphasize that the exponents can be arbitrary real numbers.

We call $g\oslash h$ a \textit{representation} of $\vp$. The advantage of this representation is the connection with convex geometry and tropical geometry, which provides new perspectives for analyzing statistical models and makes the study of PL functions more systematic. There has been progress in several directions. First, the study on the complexity and expressivity of deep neural networks with piecewise-linear activation. The use of numbers of linear regions as a measure for network complexity appeared in \cite{montufar2014number,pascanu2013number}, whose results give an explanation for why deep networks are usually favorable than shallow networks. The expected number of linear regions, on the other hand, was used in \cite{hanin2019complexity} to explain why the expressivity expected from depth is not fully achieved. Other important results along this line include \cite{montufar2021sharp}, \cite{tseran2021expected}. Thinking of PL functions as tropical rational functions, the linear regions correspond to faces of certain polytopes, where combinatorial tools abound. Whether or not the debatable name ``tropical geometry" is mentioned, understanding neural networks from the associated Newton polytopes has been integrated into the standard toolbox for machine learning \cite{hertrich2022facets}, ever since the initial proposal by Zhang et al.\cite{zhang2018tropical} and Maragos et al.\cite{MCT21}. The second direction is regression analysis. The conventional way to implement piecewise-linear regression (a.k.a. linear spline model) is to introduce dummy variables, which brings two challenges. On one hand it makes it hard to build multivariate models; on the other, one needs to estimate where the non-linearity occurs. Tropical rational functions circumvent those difficulties since it provides a global expression for the model. The idea of fitting a tropical polynomial to data was proposed in \cite{MaTh20b}. Besides these two main directions, other applications of tropical geometry include \cite{aubin2022tropical,akian2021tropical}.

 Continuing this line of work, the goal of this paper is to explore the notion of a \emph{minimal} representation of a TRF. A PL function admits (infinitely) many TRF representations, as \Cref{ex:rational_function} shows. Seeking for a minimal representation is not only useful in statistics, but also interesting in polyhedral geometry. In statistics, a model with fewer parameters prevents overfitting. In polyhedral geometry, the question is equivalent to finding canonical representatives for \textit{virtual polytopes} (see \Cref{sec:monomial-comp}). 

 \begin{ex}\label{ex:rational_function}
The following function $\psi_1:\R^2\to \R$, is given by its linear pieces. The right hand side are two TRFs representing $\psi_1$.
\begin{equation}
   \psi_1(x,y)=\begin{cases}
x, &  0\leq x \leq y \\
y, &  0\leq y\leq x \\
0, &  \text{otherwise}
\end{cases},\quad \psi_1(x,y)=(x\oplus 0)\odot (y\oplus 0)\oslash (x\oplus y \oplus 0)=(x\odot y \oplus x\oplus y) \oslash (x\oplus y). 
\end{equation}
\end{ex}

Concretely, let $\vp:\R^d \to\R$ be a PL function. Let 
$$\mathcal{F}(\vp) = \{(g,h): \vp(x) = g(x) \oslash h(x) \quad \mbox{ for all } x \in \R^d\}$$ 
be the set of all TRFs that represent $\vp$. Let $\texttt{Comp}: g \mapsto \mathbb{N}$ be some function that measures the complexity of $g$. Then $\texttt{Comp}$ induces the following preorder on $\mathcal{F}(\vp)$ 
\[g_1\oslash h_1 \prec_{\texttt{Comp}} g_2\oslash h_2 \text{ if and only if }\texttt{Comp}(g_1)\leq \texttt{Comp}(g_2),\texttt{Comp}(h_1)\leq \texttt{Comp}(h_2). \]

We consider two natural notions of complexity for a tropical rational function, the monomial complexity $\mComp$ which is intrinsic to the function and the factorization complexity $\fComp$ which depends on the factorization. 

\begin{definition}\label{def:complexity}
Let $g$ be a tropical polynomial. Its monomial complexity $\mComp(g)$ is the number of linear regions $g$ has as a real-valued function. Given a nontrivial factorization $g = h_1 \odot \dots \odot h_m$ (see \Cref{sec:trop-poly}), its factorization complexity $\fComp(h_1, \dots, h_m)$ is $\sum_{i=1}^m \mComp(h_i)-(m-1)$.
\end{definition}

The reason for the name ``monomial'' complexity is because when one writes $g$ as a tropical sum of irredundant monomials, the number of linear regions equals the number of monomials. Note that the input of $\fComp$ is all the factors, so $\fComp$ depends on the factorization. The $m-1$ term in the definition of $\fComp$ is a normalizing factor, which we explain in \Cref{sec:notations}. Different irreducible factorizations can have different complexity.
Factorization of tropical polynomials is simple in dimension 1 but is NP-Hard and non-unique in dimension $\geq 2$ \cite{gao2001decomposition,tiwary2008hardness}. Therefore, one may suspect that theory of minimal representation is already quite rich in dimension $d = 2$. Our investigation supports this finding. When $d=1$, the two notions of complexity coincide and there is a unique minimal representation (\Cref{prop:minimal-unique-1-d}). However, when $d \geq 2$, factorization and monomial complexity can differ (\Cref{prop:two-notions-agree-in-1-d}). 
 
 Since our approach is geometric, we draw some analogy from classical algebraic geometry to explain how tropical geometry helps with understanding PL functions. A dictionary between the basic notions of these two fields is given in \Cref{tab:dictionary}. Algebraic geometry studies polynomial functions via their \textit{vanishing loci}. The interplay of the vanishing loci gives much information about the polynomial equations. Let $\V(F)$ be the vanishing locus of an ordinary polynomial $F$ on $\C^d$. Geometrically, this is an algebraic hypersurface. As a set, $\V(F)=\{x\in \C^d \mid F(x)=0\}$, while it admits more structure than a set. In particular, it comes with a multiplicity. For instance, $F(x)=x$ and $G(x)=x^2$ both vanish at $x=0$, but $F$ vanishes with multiplicity one while $G$ vanishes with multiplicity two. 
 
  Formal operations on hypersurfaces give a geometric incarnation of algebraic operations on polynomials. Consider ordinary polynomials $F(x,y)=x^2y$ and $G(x,y)=xy(y+1)$ defined on $\C^2$. Let $X$ be the line $\{y=0\}$, $Y$ be the line $\{x=0\}$, and $Z$ be the line $\{y+1=0\}$. Polynomial $F$ vanishes at $Y$ with multiplicity 2 and at $X$ with multiplicity 1, so we write $\V(F)=X+2Y$. Polynomial $G$ vanishes at $X,Y,Z$ all with multiplicity 1, so we write $\V(G)=X+Y+Z$. If we look at the ``vanishing locus'' of the rational function $\Phi=\frac{F}{G}=\frac{x}{y+1}$, we may say that it vanishes at $Y$ with multiplicity 1 and at $Z$ with multiplicity $-1$, meaning $\Phi$ has a \textit{pole} along $Z$ with multiplicity 1. By formally adding and subtracting the vanishing sets, we have
\begin{equation}
    \V(\Phi) = \V(F)-\V(G) = Y-Z
\end{equation}
The formal addition of the hypersurfaces correspond to multiplying the polynomials and the formal subtraction correspond to dividing the polynomials. In the above example, $X$ is canceled out. This corresponds to the cancellation of the $y$ factor in $F$ and $G$, and it means that $\Phi$ does not vanish at $X$. In general, $\V(\Psi)$ can be written as ``zeros $-$ poles''. We note two features of this correspondence.
\begin{enumerate}[label=\textbf{(F\arabic*)}]
    \item\label{itm:feature-1} For any rational function $\Phi$, $\V(\Phi)$ determines $\Phi$ up to a scalar multiple. 
    \item\label{itm:feature-2} The expression $\V(\Phi)=\text{zeros}-\text{poles}$ gives a representation $\frac{F}{G}$ of $\Phi$, where $\V(F)=\text{zeros}$ and $\V(G)=\text{poles}$. This representation is minimal in the sense that whenever $\Phi=\frac{\widetilde{F}}{\widetilde{G}}$, $F$ divides $\widetilde{F}$ and $G$ divides $\widetilde{G}$.
\end{enumerate}

\begin{figure}[H]
    \centering
    \begin{subfigure}{0.3\textwidth}
        \includegraphics[width=1.5in]{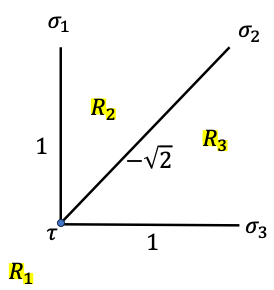}
    \caption{The bend locus $\V(\psi_1)$}
    \label{fig:bend-locus}
    \end{subfigure}
    \begin{subfigure}{0.3\textwidth}
        \includegraphics[width=1.5in]{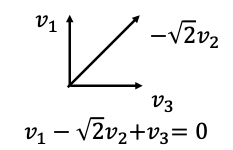}
        \vspace{0.25in}
    \caption{Balancing condition at $\tau$.}
    \label{fig:balancing-at-tau}
    \end{subfigure}
    \begin{subfigure}{0.3\textwidth}
        \includegraphics[width=2in]{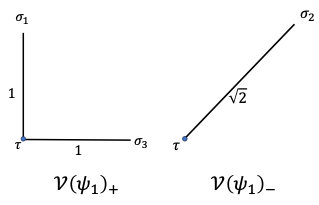}
    \caption{Positive and negative parts of $\V(\psi_1)$.}
    \label{fig:positive-and-negative}
    \end{subfigure}

    \begin{subfigure}{\textwidth}
    \centering
    \includegraphics[width=5in]{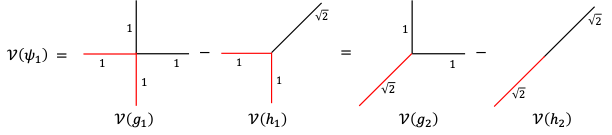}
    \caption{Two representations of $\psi_1$ induced by two different balancings. The subcomplexes in red indicate balancings.}
    \label{fig:two-reps}
    \end{subfigure}
    \caption{Geometric notions related to $\psi_1$ in \Cref{ex:rational_function}.}
\end{figure}

For a \textit{tropical} polynomial/rational function $\vp$ on $\R^d$, one can also appeal to its ``vanishing locus'', which we for convenience also denote as $\V(\vp)$. In the tropical world, ``vanishing loci'' are loci of non-linearity. As a set, it is
\begin{equation}\label{eq:def-of-nonlinear-locus}
    \V(\vp) = \{x\in \R^d\mid \vp \text{ is nonlinear at any neighborhood of }x\}.
\end{equation}

$\V(\vp)$ comes with the following extra structure.

\begin{itemize}
    \item It is a polyhedral complex with a canonical polyhedral structure, meaning that it consists of polyhedra which are glued together nicely along their facets. 
    \item It has pure dimension $d-1$, meaning that all the polyhedra maximal w.r.t. inclusion have dimension $d-1$.
    \item There is a weight function $w_\vp$ assigning a real number to each $(d-1)$-face, which encodes the degree of convexity of $\vp$ at that face. The weight is positive where $\vp$ is locally strictly convex and negative where $\vp$ is locally strictly concave.
    \item The weight function satisfies a \textit{balancing condition}, which reflects the rigidity of continuous PL functions.
\end{itemize}

While the formal definitions will be given in \Cref{sec:notations}, we illustrate the above notions using \Cref{ex:rational_function}. As is shown in \Cref{fig:bend-locus}, $\V(\psi_1)$ consists of three 1-faces $\sigma_1,\sigma_2$ and $\sigma_3$, and one 0-face $\tau$. The unit vectors that generate the rays $\sigma_1,\sigma_2$ and $\sigma_3$ are, respectively, $v_1=(0,1),v_2=(\frac{\sqrt{2}}{2},\frac{\sqrt{2}}{2})$ and $v_3=(1,0)$. $\V(\psi_1)$ divides $\R^2$ into three regions $R_1,R_2$ and $R_3$, where $\psi_1$ is linear. The 1-face $\sigma_1$ is the common facet of $R_1$ and $R_2$. From $R_1$ to $R_2$, the gradient change is $(1,0)-(0,0)=(1,0)$, which has length 1. Thus the weight on $\sigma_1$ is $w_{\psi_1}(\sigma_1)=1$. The gradient change from $R_2$ to $R_3$ is $(0,1)-(1,0)=(-1,1)$, which has length $\sqrt{2}$. However, $\psi_1$ is locally concave along $\sigma_2$, so we put the weight on $\sigma_2$ to be $w_{\psi_1}(\sigma_2)=-\sqrt{2}$. Similarly, the weight on $\sigma_3$ is $w_{\psi_1}(\sigma_3)=1$.  Observe that $w_{\psi_1}(\sigma_1)v_1+w_{\psi_1}(\sigma_2)v_2+w_{\psi_1}(\sigma_3)v_3=0$. We say that $\V(\psi_1)$ is \textit{balanced} at $\tau$ (\Cref{fig:balancing-at-tau}). The weight function is analogous to the order of vanishing. It is part of the information $\V(\psi_1)$ has. For any TRF $\vp$, the underlying set of $\V(\vp)$ is called the \textit{support} of $\V(\vp)$. Consider another function $\psi_2=\psi_1^{\odot\frac{3}{2}}$. Then $\V(\psi_2)$ and $\V(\psi_1)$ have the same support, but $\V(\psi_1)\neq \V(\psi_2)$. Since $w_{\psi_2}=\frac{3}{2}w_{\psi_1}$, one may write 
\begin{equation}
    \V(\psi_2)=\frac{3}{2}\V(\psi_1)
\end{equation}

For a tropical polynomial $f$, $\V(f)$ is called a \textit{tropical hypersurface}. As in the case of algebraic hypersurfaces, one can formally add and subtract tropical hypersurfaces and more generally bend loci of TRFs, which also correspond to operations on tropical polynomials/TRFs (See \Cref{prop:equivalance}). In \Cref{ex:rational_function}, from the two representations $g_1\oslash h_1$ and $g_2\oslash h_2$ of $\psi_1$, one writes 
\begin{equation}
    \V(\psi_1) = \V(g_1) - \V(h_1) = \V(g_2) - \V(h_2).
\end{equation}
Therefore, geometrically, we think of a representation of $\psi_1$ as a difference of tropical hypersurfaces. Recall the two features \ref{itm:feature-1} and $\ref{itm:feature-2}$ of $\V(\Phi)$ for an \textit{ordinary} rational function $\Phi$. Analogous to \ref{itm:feature-1}, in the tropical world, $\V(\vp)$ determines $\vp$ up to a linear function. However, property \ref{itm:feature-2} fails in dimension $\geq 2$. To make this precise, we make the following definition.
\begin{table}[]
\footnotesize
    \centering
    \begin{tabular}{|c|c|c|c|c|c|c|}
    \hline
      \makecell{ \textbf{classical} \\ \textbf{concept} } & \makecell{addition \& \\ subtraction} & multiplication/division & \makecell{zero/pole} & \makecell{ordinary \\ polynomial}  & \makecell{rational\\ functions} & \makecell{order of \\ vanishing}  \\ \hline
      \makecell{\textbf{tropical} \\ \textbf{counterpart}}  & \makecell{taking \\ maximum} & usual addition/subtraction & \makecell{locus of strict\\ convexity/concavity} & \makecell{convex \\ PL function}  & PL functions & weight \\\hline
    \end{tabular}
    \caption{A dictionary between classical algebraic geometry and tropical geometry}
    \label{tab:dictionary}
\end{table}

 \begin{definition}
    For any TRF $\vp$, define the \textit{positive part} $\V(\vp)_+$ and the \textit{negative part} $\V(\vp)_-$ of $\V(\vp)$ by \begin{equation}
    \begin{split}
        \V(\vp)_+ = \text{the closure in }\R^d\text{ of }\{x\in \R^d\mid \vp\text{ is locally strictly convex in some neighborhood of }x\},  \\
        \V(\vp)_- = \text{the closure in }\R^d\text{ of }\{x\in \R^d\mid \vp\text{ is locally strictly concave in some neighborhood of }x\}. 
    \end{split}
\end{equation}
Both $\V(\vp)_+$ and $\V(\vp)_-$ inherit the weight $|w_\vp|$ from $\V(\vp)$.
\end{definition}

$\V(\vp)_+$ and $\V(\vp)_-$ correspond to the subcomplexes of $\V(\vp)$ with positive and negative weights, respectively, hence the names. In other words, $\V(\vp)_+$ encodes where $\vp$ is locally strictly convex and the degree of convexity, while $\V(\vp)_-$ encodes where $\vp$ is locally strictly concave and the degree of concavity. (There are saddle points where $\vp$ is neither convex nor concave. The closure in the above definition is meant to include those points. See \Cref{rmk:convept-codim-one}). They are the analogues to zeros and poles in the case of ordinary rational functions. Unlike the case for ordinary rational functions, $\V(\vp)_+$ or $\V(\vp)_-$ may not be tropical hypersurfaces. Indeed, in \Cref{ex:rational_function}, $\V(\psi_1)_+$ consists of $\sigma_1,\sigma_3$ and $\tau$. $\V(\psi_1)_-$ consists of $\sigma_2$ and $\tau$ (\Cref{fig:positive-and-negative}). They are not balanced, thus not tropical hypersurfaces. Therefore, one cannot read off a representation of $\psi_1$ from $\V(\psi_1)_+$ or $\V(\psi_1)_-$, let alone a minimal one. To construct a representation given the information of $\V(\vp)_+$, the key observation is the following. For any representation $\vp=g\oslash h$ with tropical hypersurfaces $X=\V(g)$ and $Y=\V(h)$,
\begin{enumerate}[label=\textbf{(\Alph*)}]
\item\label{itm:one} $\V(\vp)_+$ is a subcomplex of $X$; 
$\V(\vp)_-$ is a subcomplex of $Y$.
\item\label{itm:two}  Let $w_X$ be the weight function on $X$ and $w_\vp$ the weight function on $\V(\vp)$. On each $(d-1)$-face $\sigma$ of $\V(\vp)_+$, $w_X(\sigma)\geq w_\vp(\sigma)$. In other words, $X$ is locally more convex than $\vp$. Likewise, $-Y$ is locally more concave than $\vp$.
\end{enumerate}

In other words, to find a representation for $\vp$, one needs to balance the possibly unbalanced complexes $\V(\vp)_+$ and $\V(\vp)_-$ by adding more facets with appropriate weights. This motivates the following definition.

\begin{definition}\label{def:balancing}
    Let $X$ be a tropical hypersurface and $\vp$ be a TRF. If $X$ has the properties specified in \ref{itm:one} and \ref{itm:two} relative to $\V(\vp)_+$, then $X$ is a \textit{balancing} for $\V(\vp)_+$.
    \end{definition}
    
    We will see that as one constructs a balancing for $\V(\vp)_+$, one also balances $\V(\vp)_-$ (\Cref{lem:balancing-gives-rep}). Therefore, a balancing induces a representation.
    We describe a natural balancing for any \textit{conewise} linear function $\vp:\R^2\to \R$. By definition, $\V(\vp)_+$ consists of rays $\gamma_1,...,\gamma_n$ originating from the same point. This data is the same as a collection of vectors $v_1,...,v_n$, such that $v_i$ generates $\gamma_i$ and $||v_i||$ is the weight of $\gamma_i$. Let $v_{n+1}=-\sum_{i=1}^nv_i$. $\V(\vp)_+$ is balanced if and only if $v_{n+1}=0$. If $v_{n+1}\neq 0$, let $\gamma_{n+1}$ be the ray generated by $v_{n+1}$ with weight $||v_{n+1}||$ and $X$ be the complex consisting of $\V(\vp)_+$ and $\gamma_{n+1}$ with weights inherited from both. By definition, $\sum_{i=1}^{n+1}v_i=0$, so $X$ is a balancing for $\V(\vp)_+$. Let $Y$ be the complex consisting of $\gamma_{n+1}$ and $\V(\vp)_-$. A simple calculation shows that $Y$ is also balanced. We call $X$ the \textit{canonical balancing} for $\V(\vp)_+$ and the representation $\V(\vp)=X-Y$ the \textit{canonical representation} for $\vp$. We explain why it deserves the name ``canonical'' in \Cref{sec:monomial-comp}. Intuitively, this is the representation that ``uses the least effort''. Our first result says the canonical representation is minimal w.r.t. the monomial complexity.
    
\begin{prop}\label{prop:fan-unique}
    Let $\vp:\R^2\to\R$ be a conewise linear function. The canonical representation of $\vp$ is minimal w.r.t. $\mComp$.
    \end{prop}

In \Cref{ex:rational_function}, the representation $g_2\oslash h_2$ in \Cref{fig:two-reps} is the canonical representation of $\psi_1$. We give a possible generalization of canonical representations to conewise linear functions in higher dimensions in \Cref{sec:monomial-comp}. For a tropical polynomial $g$, there is a nice correspondence between the function itself, the tropical hypersurface $\V(g)$ and its lifted Newton polytope $\Delta^\uparrow(g)$ (see \Cref{prop:equivalance}). Therefore, notions of complexity migrate to tropical hypersurfaces as well as polytopes. One can thus talk about \textit{minimal balancing problem} (see \Cref{sec:minimal-rep}). Besides serving as a step stone for studying the minimal representation problem, the minimal balancing problem turns out interesting on itself. One question in machine learning is if one can recover the architecture and parameters of a neural network by simply querying the network \cite{rolnick2020reverse,oh2019towards}. The minimal balancing problem, as a comparison, asks if one can construct a simplest network structure that is compatible with the queries. Another example is the Inverse Voronoi Problem \cite{aloupis2013fitting}, which asks how to fit a Voronoi diagram to a subdivision of the plane, where the subdivision is not necessarily convex. In that context, the minimal balancing problem asks for the Voronoi diagram with the fewest sites. 
    
    We study the difference between monomial complexity and factorization complexity. \Cref{prop:two-notions-agree-in-1-d} says $\mComp$ and $\fComp$ are the same when $d=1$, whereas for $d\geq 2$, they have distinct behavior, and the minimal balancing problem w.r.t. the monomial complexity and w.r.t. the factorization complexity can be different.  \Cref{thm:lower-bound} gives comparison bounds between the monomial complexity and the factorization complexity. Under genericity conditions, the factorization complexity of a tropical polynomial is much smaller than its monomial complexity.
   
\begin{thm}\label{thm:lower-bound}
Let $g_1,...,g_m: \R^d\to \R$ be tropical polynomials with generic parameters. Then
\[\mComp(g_1\odot \cdots \odot g_m) \geq  \fComp(g_1, \cdots, g_m)+\sum_{k=2}^d\binom{m}{k}\]
The equality holds if and only if all the intersections of $\V(g_i)$'s are affine subspaces, i.e. if and only if all $g_i$'s are binomials.
\end{thm}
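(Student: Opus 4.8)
The plan is to translate the statement into a lower bound on the number of full-dimensional regions of the tropical hypersurface arrangement $\mathcal{A}_m := \V(g_1)\cup\cdots\cup\V(g_m)\subset\R^d$, and then to prove that bound by induction on $m$. The starting point is that $\mlen(g_1\odot\cdots\odot g_m)$ counts the essential monomials of the product, and that the monomial obtained by choosing one monomial from each $g_i$ is essential exactly when the corresponding domains of linearity of the $g_i$'s have a common full-dimensional intersection; hence $\mlen(g_1\odot\cdots\odot g_m)$ equals the number of regions of $\mathcal{A}_m$, equivalently the number of vertices of the regular fine mixed subdivision of $\newt(g_1)+\cdots+\newt(g_m)$ induced by the defining parameters. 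On the other side, the trivial factorization gives $\flen(g_1\odot\cdots\odot g_m)\le\sum_{i=1}^m\mlen(g_i)-(m-1)$ for free, and for generic parameters this is in fact an equality: each $\newt(g_i)$ is a simplicial polytope, hence Minkowski-indecomposable, so each $g_i$ is irreducible, and a short Newton-polytope argument then shows the displayed factorization is minimal. Granting this, the inequality of the theorem reduces to the combinatorial bound
\[
\#\{\text{regions of }\mathcal{A}_m\}\ \ge\ \sum_{i=1}^m\mlen(g_i)-(m-1)+\sum_{k=2}^{d}\binom{m}{k},
\]
and, since $\flen(g_1\odot\cdots\odot g_m)$ already attains its maximum $\sum_i\mlen(g_i)-(m-1)$ for generic parameters, equality in the theorem holds iff this combinatorial bound is tight; we will see that this happens iff every intersection $\bigcap_{i\in S}\V(g_i)$, $\emptyset\ne S\subseteq\{1,\dots,m\}$, is an affine subspace.

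For the induction, let $R_j$ denote the number of regions of $\mathcal{A}_j$, so that $R_1=\mlen(g_1)$ and $R_j=R_{j-1}+\rho_j$, where $\rho_j$ is the number of new regions created when $\V(g_j)$ is adjoined. The quantity $\rho_j$ is controlled by how $\V(g_j)$ is subdivided by the subcomplexes $\V(g_i)\cap\V(g_j)$ for $i<j$, which have codimension one inside $\V(g_j)$ --- an arrangement problem living in the $(d-1)$-dimensional balanced complex $\V(g_j)$. The heart of the argument is the estimate
\[
\rho_j\ \ge\ \bigl(\mlen(g_j)-1\bigr)+\sum_{k=1}^{d-1}\binom{j-1}{k},
\]
with equality iff every intersection $\V(g_j)\cap\bigcap_{i\in T}\V(g_i)$, $\emptyset\ne T\subseteq\{1,\dots,j-1\}$, is an affine subspace. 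Granting this, summing over $j=2,\dots,m$ and applying the hockey-stick identity $\sum_{j=2}^m\binom{j-1}{k}=\binom{m}{k+1}$ produces the displayed region bound; and chaining the equality conditions over all $j$ and all $T$ --- writing $S=T\cup\{\max S\}$, so that these $S$ run over all subsets of size at least two, while singletons are trivially affine --- gives exactly the stated equality characterization. As a sanity check, when each $g_i$ has two monomials, so that each $\V(g_i)$ is a classical affine hyperplane, the bound is tight and reduces to the classical count $\sum_{k=0}^d\binom{m}{k}$ of regions of a generic real hyperplane arrangement.

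It remains to establish the estimate on $\rho_j$. Because $\V(g_j)$ is flat on each of its maximal cells, the arrangement it carries is, cell by cell, an affine image of an arrangement of tropical hypersurfaces in $\R^{d-1}$, so one runs a nested induction on $d$ together with an Euler-characteristic count on a one-point compactification. In this bookkeeping the term $\mlen(g_j)-1$ is the intrinsic contribution of $\V(g_j)$, and each $k$-subset $T\subseteq\{1,\dots,j-1\}$ contributes through the stratum $\V(g_j)\cap\bigcap_{i\in T}\V(g_i)$, which for generic parameters is a nonempty balanced polyhedral complex of codimension $k+1$ --- empty once $k+1>d$, which is exactly why the sum stops at $k=d-1$. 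The geometric crux is the dichotomy ``each stratum contributes at least one region, with equality precisely when it is flat'': a transversal intersection of tropical hypersurfaces that is not an affine subspace must contain a vertex or a bounded face, and such a feature forces at least one additional region.

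The main obstacle is exactly this structural estimate on $\rho_j$ --- both closing the nested induction, since the arrangement induced on $\V(g_j)$ is only \emph{piecewise} a tropical-hypersurface arrangement and one must control how its regions glue across the $(d-2)$-skeleton of $\V(g_j)$, and proving the sharp ``at least one region per non-flat stratum'' statement, which is where the genericity hypothesis is used in full (it guarantees that every $k$-fold intersection with $k\le d$ is nonempty of the expected codimension and that $(d+1)$-fold intersections are empty). An alternative to the whole inductive route is to phrase the problem as bounding the number of vertices of a regular mixed subdivision of $\newt(g_1)+\cdots+\newt(g_m)$ and to invoke Adiprasito's Lower Bound Theorem for Minkowski sums directly; the obstacle there is to match its normalization to the present (subdivision-vertex rather than polytope-vertex) setting and to extract the equality case from its proof. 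Finally, the identity $\flen(g_1\odot\cdots\odot g_m)=\sum_i\mlen(g_i)-(m-1)$ for generic parameters, used to deduce the equality case, is routine via Newton polytopes but still needs to be verified.
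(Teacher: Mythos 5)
Your plan takes a genuinely different route from the paper's, but it does not close its own central gap. The paper does not add one hypersurface at a time; instead it proves the exact identity (\Cref{thm:vertex-counting})
\[
\mlen(g_1\odot \cdots \odot g_m) = \flen(g_1\odot \cdots \odot g_m)+\sum_{k=2}^m(-1)^{k+d}\sum_{S\in \binom{[m]}{k}}\chi\Bigl(\bigcap_{i\in S}\V(g_i)\Bigr)
\]
by combining the Euler--Poincar\'e relation $\chi(X)+(-1)^d\mlen(X)=(-1)^d$ for a balanced codimension-one complex with inclusion--exclusion of Euler characteristics over the $\V(g_i)$. The inequality then follows by bounding each $\chi(\bigcap_{i\in S}\V(g_i))$ from one side using a topological input (\Cref{lem:topology-of-intersection}, due to Adiprasito): the one-point compactification of a generic intersection of tropical hypersurfaces, and also its bounded part, is a wedge of spheres of one dimension, which forces $(-1)^{k+d}\chi(\bigcap_{i\in S}\V(g_i))\ge 1$ with equality exactly for affine subspaces (\Cref{lemma:euler-bound}). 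Genericity serves only to guarantee that intersections of more than $d$ hypersurfaces are empty, truncating the sum at $k=d$.

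Your deletion--restriction induction on $m$ has the right bookkeeping --- the hockey-stick identity does turn $\sum_{j=2}^m\sum_{k=1}^{d-1}\binom{j-1}{k}$ into $\sum_{k=2}^d\binom{m}{k}$ --- but the estimate
\[
\rho_j\ \ge\ (\mlen(g_j)-1)+\sum_{k=1}^{d-1}\binom{j-1}{k}
\]
is the entire theorem in disguise, and you do not prove it. You flag the two obstructions yourself: the arrangement that $\{\V(g_i)\cap\V(g_j)\}_{i<j}$ cuts out on $\V(g_j)$ is a tropical-hypersurface arrangement only cell-by-cell, so regions must be glued across the $(d-2)$-skeleton of $\V(g_j)$ and the nested induction does not obviously close; and the ``each non-flat stratum contributes at least one extra region'' dichotomy is asserted, not proved. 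Both of these are exactly where the paper leans on the wedge-of-spheres statement: working with Euler characteristics of the whole intersection complexes sidesteps the gluing problem entirely, and the sign-definiteness of $\chi$ is what makes the non-flat case yield a strict inequality. Separately, your worry about establishing $\flen(g_1\odot\cdots\odot g_m)=\sum_i\mlen(g_i)-(m-1)$ is moot: that is the \emph{definition} of the factorization length for the given factorization (c.f.\ \Cref{sec:notations}), so no irreducibility or Newton-polytope argument is needed.
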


Instances of the above phenomenon were already observed in \cite{magnani2009convex} in the context of PL curve fitting and it has the implication on how PL functions can be efficiently stored.
When all $g_i$'s are binomials, \Cref{thm:lower-bound} recovers the classic region counting formula for arrangements of real affine hyperplanes \cite[Page 1]{orlik2013arrangements}. As part of the proof, we derive a counting formula for the number of regions cut out by an arrangement of tropical hypersurfaces, or equivalently, the number of vertices in the regular mixed subdivision of a Minkowski sum, an important topic in polyhedral geometry \cite{gritzmann1993minkowski,weibel2012maximal,fukuda2004zonotope,fukuda2007f}. Using a series of theories developed in \cite{adiprasito2021relative} and \cite{adiprasito2021partition} plus the Lefschetz theorem for simplicial polytopes, Adiprasito gave a lower bound for the number of vertices of Minkowski sums \cite{adiprasito2017lefschetz}. In comparison, \Cref{thm:lower-bound} also applies to regular mixed subdivisions of Minkowski sums. We recover part of his result as a special case (\Cref{thm:lower-bound-polytope}), as well as give a lower bound that is  asymptotically slightly stronger. Our proof is inspired by another result by Adiprasito on the topology of intersections of tropical hypersurfaces \cite{adiprasito2020note}. Using the lower bound in \cite{adiprasito2017lefschetz} and a result in \cite{montufar2021sharp}, Tseran and Mont\'{u}far derived a lower bound for the number of linear regions of maxout networks \cite[Theorem 8]{tseran2021expected}. Our proof thus provides another path to their result. \Cref{thm:lower-bound} is also complementary to Theorem 3.7 in \cite{montufar2021sharp}, an upper bound for the number of regions in arrangements of tropical hypersurfaces.

Our primary investigation suggests that factorization complexity is more suitable for statistical consideration, because it is more compatible with the process of building global statistical models by combining local models. This is illustrated by \Cref{prop:properties-of-factorization-len} and \Cref{prop:generic-fans}. The following theorem, which doesn't seem obvious without tropical tools, provides the foundation for combining local models to get global models.

\begin{thm}\label{thm:piecewise-conewise}
    Every piecewise-linear function is a linear combination of conewise linear functions, among which distinct conewise linear functions may have their cones originate from distinct points.
\end{thm}

Finally, we obtain the following result concerning computing a good balancing/representation if not necessarily a minimal one. For any TRF $\vp:\R^d\to \R$, $X=\V(\vp)_+$ has another natural balancing. One can simply take all the affine hyperplanes spanned by all its $(d-1)$-faces, which we call the \textit{associated arrangement} of $X$, denoted $\A_X$ (see \Cref{fig:canonical-balancing}). With respect to the factorization complexity, the following result shows that in dimension 2, the associated arrangement has small, if not minimal, complexity.

\begin{prop}\label{prop:lower-bound-balancing}
Let $X=\V(\vp)_+$ for some PL function $\vp: \R^2\to \R$. Let $\V_1+\cdots +\V_n$ be its minimal balancing w.r.t. the factorization complexity. Then 
\[\fComp(\A_X)\leq 3\fComp(\V_1+\cdots +\V_n)\]
where $\A_X$ is given as the sum of all the hyperplanes. 
\end{prop}

\textbf{Organization.} In \Cref{sec:notations}, we introduce notations and basic facts that will be used in this paper. In \Cref{sec:complexity-of-functions}, we
prove \Cref{thm:lower-bound}. In \Cref{sec:minimal-rep}, we prove \Cref{prop:fan-unique}, \Cref{prop:lower-bound-balancing} and \Cref{thm:piecewise-conewise}. We conclude with open questions in \Cref{sec:conclusion}.

 \textbf{Acknowledgement.} We thank Karim Adiprasito for explaining his results in \cite{adiprasito2020note} and Sam Payne for helpful discussions. We thank the referees for constructive comments. Ngoc Tran and Jidong Wang are supported by NSF Grant DMS-2113468 and the NSF IFML 2019844 award to the University of Texas at Austin.

\section{Notations and preliminaries}\label{sec:notations}
\subsection{Tropical polynomials, hypersurfaces, and Newton polytopes}\label{sec:trop-poly}

Recall the definition of tropical polynomials, tropical rational functions, and tropical hypersurfaces from the introduction. Let $g$ be a tropical polynomial on $\R^d$. A \textit{factorization} of $g$ is a representation as a product $g=g_1\odot \cdots\odot g_m$. A factorization is called \textit{nontrivial} if none of the factors is a power of $g$. This excludes cases like $g=g^{\odot \frac{1}{2}}\odot g^{\odot \frac{1}{2}}$. All factorizations discussed in this paper will be nontrivial. Let $g$ be given as a tropical sum of monomials.
We call $g$ \textit{reduced} if removing any of the monomials in $g$ will change the underlying real-valued function. In other words, $g$ is reduced if none of its monomials is redundant. For instance, $g(x,y)=x^{\odot 2}\oplus x\odot y\oplus y^{\odot 2}$ is not reduced, since removing $x\odot y$ doesn't change the real-valued function. A factorization is reduced if all the factors are reduced.

A tropical hypersurface $\V(g)$ includes the following information.
\begin{itemize}
    \item A canonical polyhedral structure on $\V(g)$: note that each linear region of $g$ is a $d$-dimensional polytope. The faces of $\V(g)$ are the faces of these polytopes of dimension lower than $d$.
    \item A weight function $w_g$: each $(d-1)$-face $\sigma$ of $V(g)$ is the common facet of two linear regions, say, $R_1$ and $R_2$, of $g$. Suppose the gradient of $g$ is $b_1$ on $R_1$ and $b_2$ on $R_2$. Then $w_g(\sigma)=||b_1-b_2||$, the 2-norm of $b_1-b_2$.
    \item The balancing condition: This is a local condition at every $(d-2)$-face of $\V(g)$. Pick a $(d-2)$-face $\tau$. Suppose it's the common facet of $(d-1)$-faces $\sigma_1,...,\sigma_n$. Let $\R \tau$ be the affine subspace spanned by $\tau$. Let $\R_+\sigma_i$ be affine half subspace spanned by $\sigma_i$ with boundary $\R\tau$. Translate everything so that $\R\tau$ contains the origin. Now $\R^d/\R\tau\cong \R^2$ and $\R_+\sigma_i/\R\tau$ can be identified with a ray. Let $v_i$ be the generator of the ray $\R_+\sigma_i/\R\tau$. The balancing condition is
\begin{equation}\label{eq:balancing-condition}
  \sum_{i}w_g(\sigma_i)v_i=0 . 
\end{equation}

\end{itemize}

 Consider a collection of tropical hypersurfaces $X_1,...,X_n$ that are \textit{generic}, meaning that the coefficients of tropical polynomials $g_1,...,g_n$ come from a probability distribution with continuous density. Their intersection will be transverse: the intersection of a face of codimension $k$ and a face of codimension $l$ is either empty or a face of codimension $k+l$. In this case $X_1\cap\cdots\cap X_n$ is a \textit{complete intersection}. As described above, a complete intersection has a canonical polyhedral structure given by intersections of the faces of $X_i$'s. Examples of polyhedral structures of tropical hypersurfaces in $\R^3$ and a complete intersection are given in \Cref{fig:trop-R3-and-intersection}.

 \begin{figure}
     \centering
     \begin{subfigure}{0.4\textwidth}
     \centering
         \includegraphics[width=2in]{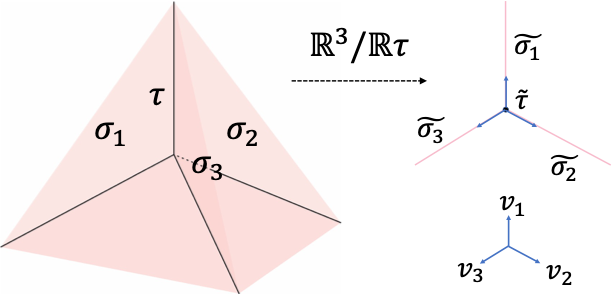}
     \caption{$\V(g)$ and the balancing condition at $\tau$. The rays $\widetilde{\sigma_1},\widetilde{\sigma_2},\widetilde{\sigma_3}$ are images of $\R_+\sigma_1,\R_+\sigma_2,\R_+\sigma_3$, respectively, in $\R^3/\R\tau$. The weighted sum of their respective generating vectors $v_1,v_2,v_3$ is zero.}
     \label{fig:trop-R3}
     \end{subfigure}
     \begin{subfigure}{0.4\textwidth}
     \centering
         \includegraphics[width=1in]{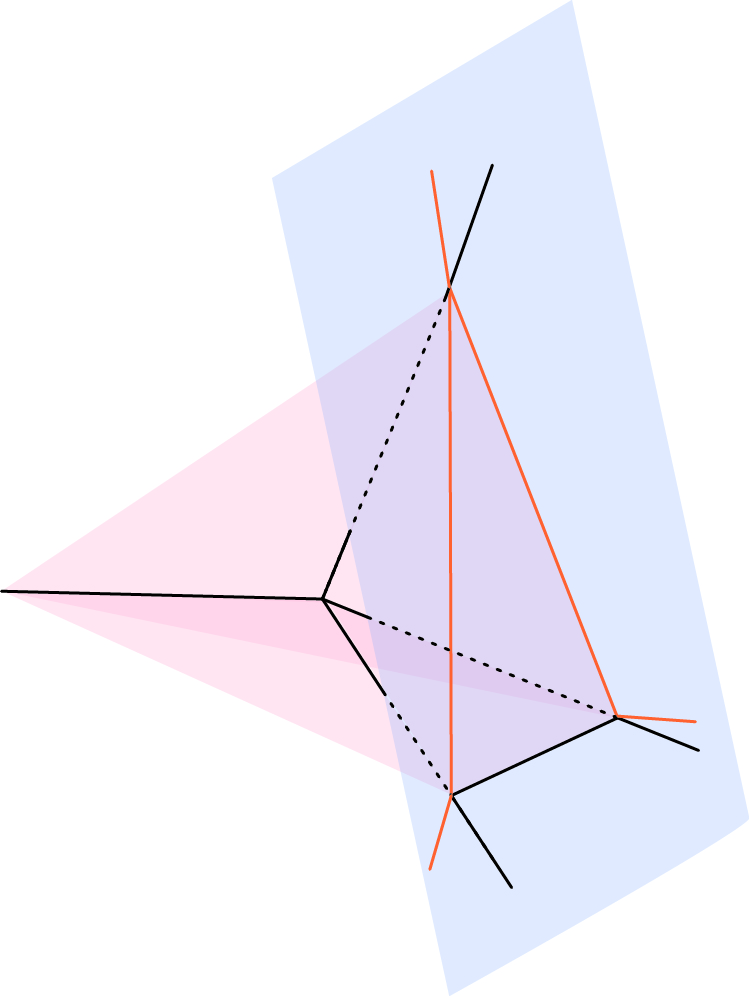}
         \caption{Intersecting $\V(g)$ with a hyperplane. The intersection is colored in orange.}\label{fig:intersection}
     \end{subfigure}
     \caption{ Here, $g=x^{\odot -1}\odot z^{\odot -1}\oplus x^{\odot 1}\odot z^{\odot -1}\oplus  y^{\odot 1}\odot z^{\odot 1}\oplus y^{\odot 2}$. $\V(g)$ is the normal fan of a tetrahedron, consisting of one 0-face, four 1-faces and six 2-faces.}\label{fig:trop-R3-and-intersection}
 \end{figure}

\begin{rmk}\label{rmk:meaning-of-weight}
    Experts in tropical geometry may notice the departure of our definitions of the weight function and the balancing condition from the usual ones. We consider arbitrary polytopes instead of just rational polytopes. For rational polytopes, the weight function and the balancing condition considered here is equivalent to the usual ones.
\end{rmk}

The Euler characteristic $\chi(X)$ of a polyhedral complex $X$ is the alternating sum
\begin{equation}
    \sum_{i=0}^d (-1)^i \text{ number of $i$-faces.}
\end{equation}
Recall that $\chi(X)$ doesn't depend on the cell structure\footnote{In this paper, the term `cell' and `face' are mostly interchangeable. More rigorously, `cell' only concerns the topology, whereas `face' is a more appropriate term when the polyhedral structure is emphasized.}, and it satisfies the inclusion-exclusion principle.

\begin{lemma}\label{lem:inclusion-exclusion} \cite[p246]{ziegler2012lectures}
Let $X,Y\subset \R^d$ be two polyhedral complexes such that $X\cap Y$ is also a polyhedral complex. Then
\begin{equation}
    \chi(X\cup Y)=\chi(X)+\chi(Y)-\chi(X\cap  Y).
\end{equation}
\end{lemma}

The \textit{Newton polytope} $\Delta(g)$ and the \textit{lifted Newton polytope} $\Delta^\uparrow(g)$ of a tropical polynomial $g=b_1\odot x^{\odot a_1}\oplus \cdots \oplus b_n\odot x^{\odot a_n}$ are, respectively, 
\begin{equation}
    \begin{split}
       &\Delta(g):=\conv\{a_1,a_2,...,a_n\}\subset \R^d,  \\
    &\Delta^\uparrow(g):=\conv((a_1,b_1),(a_2,b_2),...(a_n,b_n)\}\subset \R^{d+1}.
    \end{split}
\end{equation}
The lifted Newton polytope $\Delta^\uparrow(g)$ induces a subdivision on $\Delta(g)$ by projecting the upper faces of $\Delta^\uparrow(g)$ down onto $\Delta(g)$ from above. Subdivisions of polytopes obtained this way are called \textit{regular}. This subdivision is dual to the subdivision of $\R^d$ by $\V(g)$ in a dimension-reversing manner, as is shown in \Cref{fig:lifted-newton-poly}.

\begin{figure}[H]
    \centering
    \begin{subfigure}{0.22\textwidth}
       \includegraphics[width=1.2in]{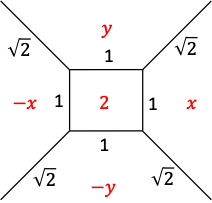} 
       \caption{$\V(g)$ with weights labeled on each 1-face.}
    \end{subfigure}\; \begin{subfigure}{0.22\textwidth}
       \includegraphics[width=1.4in]{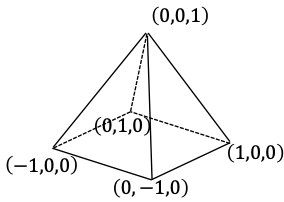}
       \caption{The lifted Newton polytope $\Delta^\uparrow(g)$.}
    \end{subfigure} 
    \;\begin{subfigure}{0.22\textwidth}
       \includegraphics[width=1.2in]{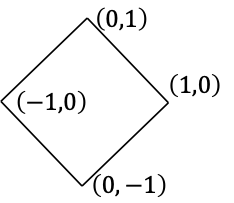}
       \caption{The Newton polytope $\Delta(g)$.}
    \end{subfigure}
    \;\begin{subfigure}{0.22\textwidth}
       \includegraphics[width=1.1in]{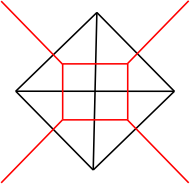}
       \caption{The subdivision on $\Delta(g)$ and the dual subdivision on $\R^2$.}
    \end{subfigure}
    \caption{Geometric objects associated with $g(x,y)=\max\{x,y,-x,-y,2\}$.}
    \label{fig:lifted-newton-poly}
\end{figure}

 A special family of tropical polynomials is those representing \textit{conewise linear functions}. For those tropical polynomials, the regular subdivision is just the original polytope. The corresponding tropical hypersurfaces are \textit{polyhedral fans}, which are the normal fans of their Newton polytopes. For the definitions of these concepts, see \cite{ziegler2012lectures}. We summarize some useful properties of the above 
constructions.

\begin{prop}\label{prop:equivalance}
    For reduced tropical polynomials $g,h:\R^d\to \R$, one has the following:
    \begin{enumerate}
        \item[(a)] $\V(g)=\V(h)$ as weighted polyhedral complexes if and only if $g\oslash h$ is a linear function. In other words, $\V(g)$ determines $g$ up to a linear function.
        \item[(b)] 
        $g$ and $h$ are the same real-valued function if and only if $\Delta^\uparrow(g)=\Delta^\uparrow(h)$.

        \item[(c)] For a pure $(d-1)$-dimensional polyhedral complex $X\subset \R^d$ with weight function $w_X$, there is some tropical polynomial $g:\R^d\to \R$ such that $\V(g)=X$ if and only if $w_X$ is nonnegative and $X$ is balanced. 
    \end{enumerate}
\end{prop}

\begin{proof}
Since the bend locus of $g\oslash h$ is $\V(g)-\V(h)$ (see \Cref{sec:TRF}), $g$ and $h$ differ by a linear function if and only if $\V(g)=\V(h)$, so (a) is true. Since $g$ is reduced, the monomials in $g$ correspond bijectively to the vertices of $\Delta^\uparrow(g)$. In other words, $\Delta^\uparrow(g)$ determines $g$ uniquely as a real-valued function, so (b) is true. When all the coefficients of $g$ are rational, (c) is Proposition 3.3.10 in Section 3.3 of \cite{maclagan2015introduction}, whose proof works verbatim for arbitrary coefficients.
\end{proof}

\subsection{Operations on tropical hypersurfaces}\label{sec:operations-on-hyper}

The operations on tropical hypersurfaces will be essential throughout the whole paper, so we spend this section clarifying how addition, subtraction, and scalar multiplication work for tropical hypersurfaces. A totally rigorous treatment of the ideas can be done using indicator functions, but that would involve excessive technicality that is more distracting than inspiring. Therefore, we give the following description with plenty of examples that cover all possible situations, which should suffice for the purpose of this paper.

\begin{rmk}\label{rmk:convept-codim-one}
     We emphasize that in the above description, transparency/light emission only makes sense in dimension $d-1$. In other words, the weight function in our context is a concept only concerning the $(d-1)$-dimensional part of a complex. This can be thought of as an analogous phenomenon from classical algebraic geometry: for instance, the function $\Phi(z_1,z_2)=\frac{z_1}{z_2}$ on $\C^2$ isn't defined at $(0,0)$, but one still says that $\Phi$ vanishes along $\{z_1=0\}$. Roughly speaking, modifying a tropical hypersurface in codimension larger than one won't cause problems within the scope of this paper. For example, one can refine the polyhedral structure of a tropical hypersurface by subdividing any $(d-1)$-face, but the refined polyhedral complex still determines the same function. 
\end{rmk}

A pure $(d-1)$-dimensional weighted polyhedral complex can be thought of as an artifact made by putting pieces of the above glass together. In particular, for any TRF $\vp$, $\V(\vp)$, $\V(\vp)_+$ and $\V(\vp)_-$ are all such artifacts, where the weight function $w_\vp$ specifies the level of non-transparency on each $(d-1)$-face. For $X$ and $Y$ two pure $(d-1)$-dimensional polyhedral complexes with weight functions $w_X$ and $w_Y$, respectively, and $\alpha>0$, 
\begin{itemize}
    \item $\alpha X$ simply amplifies the non-transparency/light-emission of $X$ by $w_{\alpha X}=\alpha w_X$;
    \item negation $-X$ reverses non-transparency and light-emission by $w_{-X}=-w_X$;
    \item $X+Y$ is the superposition of $X$ and $Y$ and $X-Y=X+(-Y)$.
\end{itemize}

The above description applies for weighted pure $(d-1)$-dimensional polyhedral complexes in general, but we are only concerned with subcomplexes of $\V(\vp)$ for some TRF $\vp$. For examples of addition and subtraction, see \Cref{fig:adding-tropical-hypersurfaces,fig:subtraction}. The addition is commutative and associative, and it's easy to check that if $X$ and $Y$ are balanced, then $\alpha X$ and $X\pm Y$ are also balanced. In addition to that, the operators $\V$, $\Delta$ and $\Delta^\uparrow$ on tropical polynomials satisfy the following.
\begin{equation}\label{eqn:trop-poly-operation-correspondence}
     \begin{split}
         \V(g^{\odot\alpha}\odot h^{\odot \beta}) = \alpha\V(g) + \beta\V(h), \\
    \Delta(g^{\odot\alpha}\odot h^{\odot \beta}) = \alpha\Delta(g) + \beta\Delta(h), \\
    \Delta^\uparrow(g^{\odot\alpha}\odot h^{\odot \beta}) = \alpha\Delta^\uparrow(g) + \beta\Delta^\uparrow(h),
     \end{split}
 \end{equation}
where the sum between polytopes is Minkowski sum.
\begin{figure}
    \centering
    \includegraphics[width=4in]{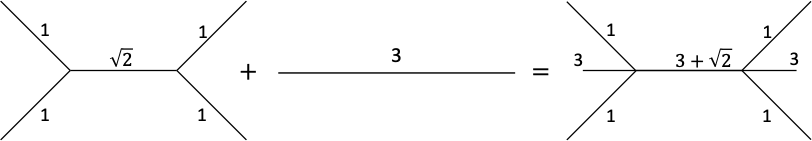}
\includegraphics[width=3.5in]{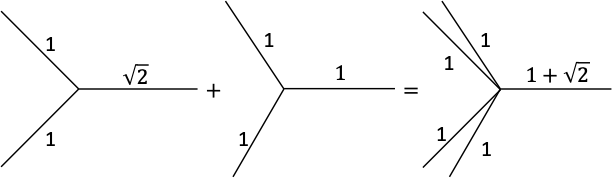}
    \caption{Two examples of adding tropical hypersurfaces with weights labeled on each 1-cell.}
    \label{fig:adding-tropical-hypersurfaces}
\end{figure}

Let $g$ be a reduced tropical polynomial. Recall that its \textit{monomial complexity} $\mComp(g)$, is the number of linear regions as a PL function. For a reduced factorization $g=h_1\odot \cdots \odot h_m$, its \textit{factorization complexity} is

\[\fComp(h_1,\cdots, h_m) = \sum_{i=1}^m \mComp(h_i)-(m-1).\]

The $-(m-1)$ term comes from normalization. For any tropical polynomial $G(x)=b_1\odot x^{\odot a_1}\oplus \cdots \oplus b_n\odot x^{\odot a_n}$, one can factor out the first term and write
\begin{equation}
    G(x)=b_1\odot x^{\odot a_1}\odot (0 \oplus (b_2-b_1)\odot x^{\odot (a_2-a_1)} \oplus \cdots \oplus (b_n-b_1)\odot x^{\odot (a_n-a_1)}) = b_1\odot x^{\odot a_1} \odot \widetilde{G}(x).
\end{equation}
Note that $\widetilde{G}(x)$ is defined by one fewer monomial than $G$. One can do this for each of the factors $h_i$ of $g$ and get $h_i=r_i\odot \widetilde{h_i}$ for $r_i$ some monomial and $\widetilde{h_i}$ a polynomial defined by one fewer monomial than $h_i$. Now
\begin{equation}\label{eq:normalization}
    g = r_1\odot\cdots\odot r_m \odot (\widetilde{h_1}\odot \cdots \odot \widetilde{h_m}).
\end{equation}
Note that $r_1\odot \cdots \odot r_m$ is a single monomial. To define $g$, one needs to specify the monomial $r_1\odot \cdots \odot r_m$ and the monomials in $\widetilde{h_i}$. Since each $\widetilde{h_i}$ takes one fewer monomial to define than $h_i$, the normalization \eqref{eq:normalization} requires $-m+1$ fewer monomials.

    Due to \Cref{prop:equivalance}, notions of complexity apply equally well to polytopes and tropical hypersurfaces, each carrying different meanings (see \Cref{tab:meaning-of-complexity}). The corresponding notions of complexity for polytopes are straightforward.  Through this correspondence, $\mComp(P)$ for a polytope is the number of vertices $P$ has, and $\fComp(P_1,P_2,\cdots,P_m)$ is the sum of the number of vertices of $P_i$'s minus $m-1$. Here, the $-(m-1)$ comes from the fact that, to determine a polytope $P$ from its factors $P_1,...,P_m$, one can assume $P_1,...,P_m$ each has a vertex at the origin. Each $P_i$ is determined by $\mComp(P_i)-1$ vertices. This determines $P$ up to a translation. Therefore, to determine $P$ from $P_1,...,P_m$, one needs 
    \begin{equation}
     \sum_{i=1}^m(\mComp(P_i)-1)+1=\sum_{i=1}^m\mComp(P_i)-(m-1)
    \end{equation}
    vertices.

\begin{table}[h]
        \centering
        \begin{tabular}{c|c|c}
        $\mComp(g) $ & $\mComp(P)$ & \mComp(X) \\ \hline
         $\#$ of monomials & $\#$ of vertices & $\#$ of regions in $\R^d\bs X$
    \end{tabular}
        \caption{Meanings of complexity for tropical polynomials, polytopes, and tropical hypersurfaces}
        \label{tab:meaning-of-complexity}
    \end{table}

\begin{rmk}
    The difference between $\mComp(P)$ and $\fComp(P_1,\cdots , P_m)$ might be an interesting measure. On one hand, fixing a polytope $P$ and varying the factorization $P=P_1+\cdots + P_m$, this difference
    can be regarded as a measure of the efficiency of the decomposition. On the other hand, fixing the combinatorial types of $P_1,...,P_m$ and changing their geometric realizations, the difference $\mComp(P_1+\cdots+P_m)-\fComp(P_1,...,P_m)$ can be regarded as a measure of the transversality of the Minkowski sum. For instance, if $P=\sum_{k=1}^m I_k\subset \R^d$ is a full dimensional zonotope for generic line segments $I_1,...,I_m$, then
    \[\mComp(P)=\sum_{k=0}^d\binom{m}{k}, \quad \fComp(I_1+\cdots + I_m)=m+1.\]
    As some of the line segments move and become coplanar, the above difference becomes smaller. A zonotope is dual to a central hyperplane arrangement. We don't know if this number is a matroidal invariant or not.
    \end{rmk}
    
\begin{ex}
    Consider (a) the $d$-simplex $\Delta_d$ and (b) the $d$-cube with the factorization into $d$ line segments. Since the $d$-simplex is indecomposable, its factorization complexity and monomial complexity are both $d+1$. The $d$-cube has monomial complexity $2^d$. However, the $d$-cube factors into $d$-line segments, and the complexity of this factorization is $2d-(d-1)=d+1$, the same as the $d$-simplex. This is consistent with the following intuition. A $d$-simplex is defined by fixing $d$-vertices, while a $d$-cube is defined by fixing one vertex and connecting $d$-line segments. They contain the same amount of information.
\end{ex}

We note an essential topological property of tropical hypersurfaces. 

\begin{lemma}[Euler-Poincar\'{e} relation] The Euler characteristic of a tropical hypersurface $X$ satisfies
\begin{equation}
    \chi(X)+(-1)^d\mComp(X)=(-1)^d.
\end{equation}
 \end{lemma}

 \begin{proof}
     This follows from the usual Euler-Poincar\'{e} relation for polytopes \cite[p231]{ziegler2012lectures}, the fact that $X$ is dual to a polytope, and the definition of $\mComp$.
 \end{proof}

 Finally, we mention two notions associated with any intersection of generic tropical hypersurfaces that will be used later. Let $X\subset \R^d$ be such an intersection and $\sigma$ be any of its faces. The \textit{recession fan} $\rec(X)$ of $X$ is what $X$ looks like ``from a distance", and the \textit{tangent fan} $T_\tau(X)$ of $X$ at $\tau$ is what $X$ looks like locally at $\tau$. More precisely, 
 \begin{equation}
 \begin{split}
     & \rec(X) = \{x\in \R^d\mid \text{ there is some } a\in X \text{ such that }a+tx\in X \text{ for all }t>0\}, \\
     & T_\tau(X)=\bigcup_{\sigma\supset \tau }\{t(x-y)+y\in \R^d\mid y\in \tau,x\in \sigma ,t>0\}.
 \end{split}  
 \end{equation}
 For a tropical hypersurface $X$, both $\rec(X)$ and $T_\tau(X)$ are weighted. The weight of a cell $\sigma$ is the sum of the weights of all cells $\widetilde{\sigma}_1,...,\widetilde{\sigma}_n$ such that $\sigma = \rec(\widetilde{\sigma}_i)$ for all $i$. The weight of a cell $\sigma$ is the weight of the corresponding cell in $X$. See \Cref{fig:recession} for an example.

\begin{figure}[H]
    \centering
    \includegraphics[width=4in]{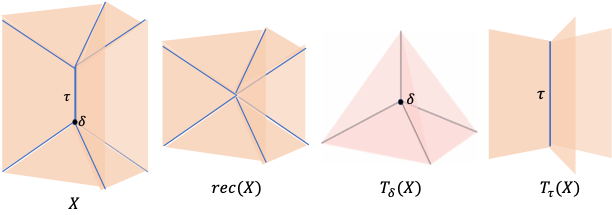}
    \caption{A tropical hypersurface $X$, its recession fan $\rec(X)$, and the tangent fans at $\delta$ and $\tau$.}
    \label{fig:recession}
\end{figure}

\subsection{Tropical rational functions and virtual polytopes}\label{sec:TRF}

 We defined the weight function on $\V(g)$ for a tropical polynomial in \Cref{sec:trop-poly}. The weight function on $\V(\vp)$ for a tropical rational function $\vp$ is defined in the same manner with one more feature: the weight doesn't have to be positive. Suppose a $(d-1)$-face $\sigma$ separate two linear regions $R_1$ and $R_2$. Let $u_1$ and $u_2$ be the gradients of $\vp$ on $R_1$ and $R_2$, respectively. Then 
 \begin{equation}
     w_\vp(\sigma)\begin{cases}
         ||u_1-u_2||, & \text{ if $\vp$ is locally convex along $\sigma$,}\\
          -||u_1-u_2||, & \text{ if $\vp$ is locally concave along $\sigma$.}
     \end{cases}
 \end{equation}
 $w_\vp$ satisfies the same balancing condition as specified in \eqref{eq:balancing-condition}.
 
 Recall that for a tropical rational function $\vp$, $\V(\vp)$ can be decomposed into its positive part $\V(\vp)_+$ and negative part $\V(\vp)_-$, where
\begin{equation}
    \begin{split}
        \V(\vp)_+ = \text{the closure in }\R^d\text{ of }\{x\in \R^d\mid \vp\text{ is locally strictly convex in some neighborhood of }x\},  \\
        \V(\vp)_- = \text{the closure in }\R^d\text{ of }\{x\in \R^d\mid \vp\text{ is locally strictly concave in some neighborhood of }x\}. 
    \end{split}
\end{equation}

 As weighted polyhedral complexes, we may write $\V(\vp)=\V(\vp)_+-\V(\vp)_-$, in light of the operation described in \Cref{sec:operations-on-hyper}. We call this the \textit{Jordan decomposition} of $\V(\vp)$. This name is motivated by the following. For $\vp:\R\to\R$, $\V(\vp)$ is a finite set of points. Each point has a weight indicating the degree of convexity/concavity of $\vp$ at that point. Regarding $\V(\vp)$ as a weighted sum of the indicator functions of all these points, $\V(\vp)$ defines a signed measure on $\R$, and $\V(\vp)=\V(\vp)_+-\V(\vp)_-$ is exactly the Jordan decomposition of $\V(\vp)$ as a signed measure into the difference of two positive measures. Examples of Jordan decompositions are given in \Cref{fig:subtraction}. 

Recall the notion of balancing given in \Cref{def:balancing}: $\V(\vp)_+$ and $\V(\vp)_-$ are in general not tropical hypersurfaces, and a balancing for $\V(\vp)_+$ is a tropical hypersurface that contains $\V(\vp)_+$ as a subcomplex. The following lemma is crucial. It implies that by constructing a balancing for $\V(\vp)_+$, one actually obtains a representation for $\vp$.

\begin{lemma}\label{lem:balancing-gives-rep}
    If $X$ is a balancing of $\V(\vp)_+$, then $Y=(X-\V(\vp)_+)+\V(\vp)_-$ is a tropical hypersurface that balances $\V(\vp)_-$. Therefore, for each balancing $X$ one has a representation $\V(\vp)=X-Y$. 
\end{lemma}
\begin{proof}
  First, $X-\V(\vp)_++\V(\vp)_-=X-(\V(\vp)_+-\V(\vp)_-)=X-\V(\vp)$. Since $X$ and $\V(\vp)$ are both balanced, $X-\V(\vp)$ is balanced. From the definition of a balancing, $X-\V(\vp)_+$ has nonnegative weight. Since $\V(\vp)_-$ also has nonnegative weight, $X-\V(\vp)$ has nonnegative weight. Therefore, by \Cref{prop:equivalance} part (c), it is a tropical hypersurface. Finally, relative to $\V(\vp)_-$, $X-\V(\vp)_++\V(\vp)_-$ satisfies the properties \ref{itm:one} and $\ref{itm:two}$ in the definition of balancing (\Cref{def:balancing}).
\end{proof}
\begin{figure}[H]
    \centering
    \includegraphics[width=5.5in]{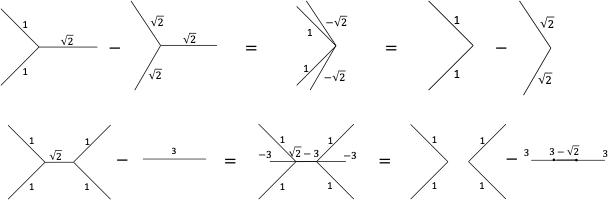}
 \caption{Examples of subtracting two tropical hypersurfaces. The rightmost expression in each equation is the corresponding Jordan decomposition.}
    \label{fig:subtraction}
    
\end{figure}

For a general TRF $\vp$, one cannot associate a Newton polytope if it's not convex. However, one can assign a \textit{virtual polytope} to $\vp$. A virtual polytope is an equivalence class of formal differences of polytopes $P-Q$, where the equivalence relation is defined as
\begin{equation}
    P_1-Q_1\sim P_2-Q_2 \text{ if } P_1+Q_2 = P_2+Q_1.
\end{equation}
It is well-known that the above is indeed an equivalence relation. A virtual polytope have \textit{virtual faces}: recall that faces of polytopes are the set of maximizers of linear functionals. For a virtual polytope $P-Q$, if a linear functional $\ell$ is maximized by the face $F_1$ of $P$ and the face $F_2$ of $Q$, then the virtual face of $P-Q$ corresponding to $\ell$ is the virtual polytope $F_1-F_2$. Virtual polytopes have been studied in other contexts, and they play an important role in quasidifferential calculus (see, for instance, \cite{pallaschke2000minimal}). If $\vp=g\oslash h$, let $\Delta^\uparrow(\vp)=\Delta^\uparrow(g)-\Delta^\uparrow(h)$ and $\Delta(\vp)=\Delta(g)-\Delta(h)$. Choosing a different  representation $g'\oslash h'$ yields $\Delta(g')-\Delta(h')$, which represents the same equivalence class. Therefore, it still makes sense to use the symbol $\Delta^\uparrow(\vp)$ and $\Delta(\vp)$ to denote the virtual polytopes associated with $\vp$. The correspondence in \eqref{eqn:trop-poly-operation-correspondence} extends to tropical rational functions,
\begin{equation}\label{eqn:TRF-operation-correspondence}
     \begin{split}
         \V(\vp^{\odot\alpha}\oslash \psi^{\odot \beta}) = \alpha\V(\vp) - \beta\V(\psi), \\
    \Delta(\vp^{\odot\alpha}\oslash \psi^{\odot \beta}) = \alpha\Delta(\vp) - \beta\Delta(\psi), \\
    \Delta^\uparrow(\vp^{\odot\alpha}\oslash \psi^{\odot \beta}) = \alpha\Delta^\uparrow(\vp) - \beta\Delta^\uparrow(\psi).
     \end{split}
 \end{equation}
 As is in the case of tropical polynomials, for a conewise PL function $\vp$, a virtual $k$-face of $\Delta(\vp)$ is dual to a $(d-k)$-face of $\V(\vp)$. The following results extend \Cref{prop:equivalance} to TRFs.

{\color{red}

}

\begin{prop}\label{prop:equivalence-for-TRF}
    Let $\vp$ and $\psi:\R^d\to \R$ be two reduced TRFs.
    \begin{enumerate}
        \item[(a)] $\V(\vp)=\V(\psi)$ as weighted polyhedral complexes if and only if $\vp$ and $\psi$ differ by a linear function.
        \item[(b)] $\vp=\psi$ if and only if $\Delta^\uparrow(\vp)=\Delta^\uparrow(\psi)$.
        
        \item[(c)] For a pure $(d-1)$-dimensional polyhedral complex $X\subset \R^d$ with weight function $w_X$, there is some TRF $\vp:\R^d\to \R$ such that $\V(\vp)=X$ if and only if $X$ is balanced. 
    \end{enumerate}
\end{prop}
\begin{proof}
The proof for (a) is the same as the the proof for \Cref{prop:equivalance}(a). For (b), note that by \eqref{eqn:TRF-operation-correspondence}, $\vp-\psi=0$ if and only if $\Delta^\uparrow(\vp\oslash\psi) = \{0\}$ if and only if $\Delta^\uparrow(\vp)=\Delta^\uparrow(\psi)$. For (c), given any balanced polyhedral complex $X$, we need to produce a TRF $\vp$ such that $\V(\vp)=X$. Using the same argument in the proof of \Cref{lem:balancing-gives-rep}, we only need to find one balancing for $X_+$, the subcomplex of $X$ with positive weight. For each $d-1$-cell $\sigma$ of $X_+$, let $H_\sigma$ be the hyperplane spanned by $\sigma$ that inherits the weight of $\sigma$. Let $Y$ be the hyperplane arrangement consisting of all such hyperplanes. Namely,
\begin{equation}
    Y=\sum_\sigma H_\sigma,
\end{equation}
 where the sum is the sum of the hyperplanes as tropical hypersurfaces. Then it's clear that $Y$ is a balancing for $X_+$.
\end{proof}
\begin{figure}[H]
    \centering
    \includegraphics[width=4in]{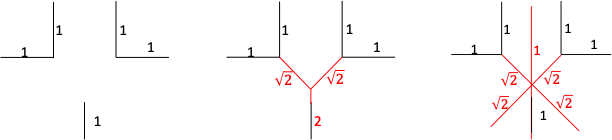}
    \caption{An unbalanced polyhedral complex (left) and two different balancings (middle and right).}
    \label{fig:two-different-balancing}
\end{figure}
In the proof of \Cref{prop:equivalence-for-TRF}(c), we constructed a balancing that consists of the hyperplanes spanned by all the $(d-1)$-faces of $\V(\vp)_+$.
This is a hyperplane arrangement. We call it the \textit{associated arrangement} with $\V(\vp)_+$. An example of the associated arrangement is given in \Cref{fig:canonical-arrangement}.

\begin{figure}
    \centering
    \includegraphics[width=4in]{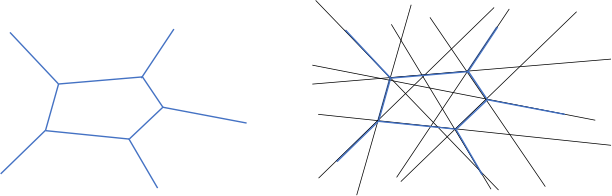}
    \caption{The arrangement (right) associated with a tropical hypersurface (left).}
    \label{fig:canonical-arrangement}
\end{figure}

We end this section with the following observation. For tropical polynomials, the (lifted) Newton polytope and the tropical hypersurface are in one-to-one correspondence (up to a linear function) and there isn't a choice involved. For TRFs, the bend loci has concrete geometry, however, the corresponding polytope constructions are only an equivalence classes.

\section{Complexity and arrangements of tropical hypersurfaces}\label{sec:complexity-of-functions} 

This section is devoted to proving \Cref{thm:lower-bound}. We first observe the following distinct behavior of monomial complexity and factorization complexity.

\begin{prop}\label{prop:two-notions-agree-in-1-d}
For tropical polynomials in one variable, the monomial complexity and the factorization complexity are equal for any non-trivial factorization. There exist tropical polynomials $g,h$ in two variables with irreducible factorizations $g=g_1\odot \cdots \odot g_n$ and $h=h_1\odot \cdots \odot h_m$, such that $\mComp(g)<\mComp(h)$ whereas $\fComp(g_1,...,g_n)>\fComp(h_1,...,h_m)$. 
\end{prop}

\begin{proof}
It is well known that tropical polynomials in one variable factor uniquely into linear factors. The number of tropical roots is the number of linear pieces minus 1, and the factorization complexity is the number of tropical roots plus 1. Hence, for any tropical function $g$ in one variable, $\fComp(g)=\mComp(g)$ for the irreducible factorization and the result follows for arbitrary non-trivial factorizations. To see how the above fails for higher dimensions, consider the following example. Let 
\begin{gather*}
    g(x,y)=(1\odot x^{\odot 1}\odot y^{\odot -1})\oplus (1\odot  y^{\odot -2}) \oplus (1\odot x^{\odot -1}\odot y^{\odot -1})\oplus 1 \oplus y, \\
    h(x,y) = (x^{\odot -1}\odot y^{\odot -1}\oplus 0)\odot ((-2)\odot x^{\odot 1}\odot y^{\odot -1}\oplus 0) \odot (y^{\odot 1}\oplus 0) = h_1\odot h_2\odot h_3
\end{gather*}
Note that $g,h_1,h_2,h_3$ are all irreducible, and $\mComp(g)=5 <\mComp(h)=6$, whereas $\fComp(g)=5>\fComp(h_1,h_2,h_3)=4$. \end{proof}

\Cref{fig:two-complexitys-differ} shows $\V(g)$ and $\V(h)$ as in the proof of \Cref{prop:two-notions-agree-in-1-d} and the dual subdivision of their Newton polytopes. The subdivision defined by $g$ has fewer vertices, but it takes more room to store in a computer since it exhibits less regularity, compared to the zonotope on the right.

\begin{figure}[H]
    \centering
    \includegraphics[width=2.3in]{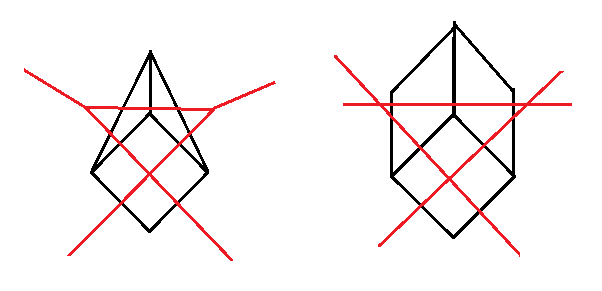}
    \caption{$\V(g)$, $\V(h)$ as in the proof of \Cref{prop:two-notions-agree-in-1-d} and the dual subdivision of the corresponding Newton polytopes.}
    \label{fig:two-complexitys-differ}
\end{figure}

The idea for proving \Cref{thm:lower-bound} is the following. First, we establish the relation between the monomial complexity and the factorization complexity by appealing to the Euler-Poincar\'{e} relation of tropical hypersurfaces and the inclusion-exclusion property of the Euler characteristic. Then, we control the topology of intersections of generic tropical hypersurfaces and bound their Euler characteristics. 

\begin{thm}[Region counting formula for arrangements of tropical hypersurfaces]\label{thm:vertex-counting}
Let $g_1,...,g_m:\R^d\to \R$ be tropical polynomials. Then
\begin{equation}
    \mComp(g_1\odot \cdots \odot g_m) = \fComp(g_1, \cdots, g_m)+\sum_{k=2}^m(-1)^{k+d}\sum_{S\in \binom{[m]}{k}}\chi(\bigcap_{i\in S}\V(g_i)).
\end{equation}
\end{thm}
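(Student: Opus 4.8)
The plan is to proceed by induction on $m$, peeling off one signomial at a time and invoking the two preliminary lemmas at each step. First I would set up the induction: let $G_j = g_1 \odot \cdots \odot g_j$, so that $\V(G_j) = \V(g_1) + \cdots + \V(g_j) = \V(G_{j-1}) \cup \V(g_j)$ as underlying polyhedral complexes (here I would assume, as is standard, a common refinement so the union makes sense cell-wise). The base case $m=1$ is trivial since $\mlen(g_1) = \flen(g_1)$ and the sum over $k$ is empty. For the inductive step, I would apply the Euler--Poincar\'e relation to write $\mlen(\V(G_j)) = (-1)^d(\chi(\V(G_j)) - (-1)^d) = (-1)^{d+1}\chi(\V(G_j)) + 1$, and similarly for $\V(G_{j-1})$ and $\V(g_j)$; then I would control $\chi(\V(G_j))$ via \Cref{lem:euler} applied to $X = \V(G_{j-1})$ and $Y = \V(g_j)$, giving $\chi(\V(G_j)) = \chi(\V(G_{j-1})) + \chi(\V(g_j)) - \chi(\V(G_{j-1}) \cap \V(g_j))$.

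The key geometric input I would need is a formula for $\chi(\V(G_{j-1}) \cap \V(g_j))$. Observe that $\V(G_{j-1}) \cap \V(g_j) = \left(\bigcup_{i=1}^{j-1} \V(g_i)\right) \cap \V(g_j) = \bigcup_{i=1}^{j-1} \big(\V(g_i) \cap \V(g_j)\big)$, so by the full inclusion--exclusion principle for Euler characteristic (iterating \Cref{lem:euler}) this equals $\sum_{\emptyset \neq T \subseteq [j-1]} (-1)^{|T|+1} \chi\big(\bigcap_{i \in T} \V(g_i) \cap \V(g_j)\big)$. Feeding this back into the recursion and carefully bookkeeping the signs $(-1)^{d+1}$ attached to each Euler characteristic, each subset $S \subseteq [j]$ containing $j$ and of size $k \geq 2$ contributes a term $(-1)^{k+d}\chi\big(\bigcap_{i\in S}\V(g_i)\big)$; combined with the inductive formula for $\mlen(\V(G_{j-1}))$ this telescopes into exactly the claimed expression for $\mlen(\V(G_j))$. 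The factorization-length term $\flen(g_1 \odot \cdots \odot g_m) = \sum_i \mlen(g_i) - (m-1)$ appears naturally because each step of the recursion $\mlen(\V(G_j)) = \mlen(\V(G_{j-1})) + \mlen(\V(g_j)) - 1 + (\text{intersection corrections})$ contributes the $\mlen(g_j) - 1$ summand.

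The main obstacle I anticipate is the sign bookkeeping: one has to track how the factor $(-1)^{d+1}$ from the Euler--Poincar\'e relation interacts with the $(-1)^{|T|+1}$ from inclusion--exclusion and verify that, after summing over all ways a given $k$-subset $S$ can arise across the stages of the induction, the coefficient collapses to the single clean term $(-1)^{k+d}$ rather than an alternating sum. A secondary technical point is ensuring the inclusion--exclusion identity is applied to the \emph{underlying complexes} (ignoring weights), which is legitimate since $\chi$ and $\mlen$ depend only on the support and its cell structure, and that common refinements do not change any $\chi$; I would state this explicitly at the outset. Once the combinatorial identity is pinned down, no further geometry is needed — the topological content (bounding the $\chi\big(\bigcap_{i\in S}\V(g_i)\big)$ for generic parameters) is deferred to the proof of \Cref{thm:lower-bound} proper.
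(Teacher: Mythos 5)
Your proposal is correct and rests on exactly the same two ingredients the paper uses: the Euler--Poincar\'e relation and \Cref{lem:euler}. The paper's proof is a one-liner (``combine the two lemmas''), and what it is implicitly doing is the \emph{direct} argument: write
\[\mlen(g_1\odot\cdots\odot g_m)=1+(-1)^{d+1}\chi\Bigl(\bigcup_{i=1}^m\V(g_i)\Bigr),\]
apply iterated inclusion--exclusion once to the full union to get $\chi\bigl(\bigcup_i\V(g_i)\bigr)=\sum_{\emptyset\neq S\subseteq[m]}(-1)^{|S|+1}\chi\bigl(\bigcap_{i\in S}\V(g_i)\bigr)$, and then observe that the $|S|=1$ terms reassemble, via Euler--Poincar\'e applied to each $\V(g_i)$, into $\sum_i\mlen(g_i)-m=\flen(g_1\odot\cdots\odot g_m)-1$. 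Your induction on $m$ arrives at the same identity, and a careful check confirms it closes correctly: the intersection correction $(-1)^d\chi(\V(G_{j-1})\cap\V(g_j))$ contributes precisely the new subsets $S\ni j$ with $|S|\geq 2$, each with coefficient $(-1)^{|S|+d}$, while $\mlen(g_j)-1$ updates $\flen(G_{j-1})$ to $\flen(G_j)$. The only comment worth making is that the induction is somewhat self-defeating as an organizing device: to run the inductive step you still have to invoke full iterated inclusion--exclusion on $\V(G_{j-1})\cap\V(g_j)=\bigcup_{i<j}(\V(g_i)\cap\V(g_j))$, so you haven't actually avoided the $m$-fold inclusion--exclusion, you've just interleaved it with the Euler--Poincar\'e substitution. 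The ``sign bookkeeping'' you flag as the main obstacle becomes trivial in the direct version, where every term carries a single uniform factor $(-1)^{d+1}\cdot(-1)^{|S|+1}=(-1)^{|S|+d}$. Your caveat about working with underlying supports and common refinements is well placed and worth stating, though it is the standard convention and the paper leaves it implicit.
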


\begin{proof}
By the Euler-Poincar\'{e} relation,
\begin{equation}\label{eq:counting0}
    \chi(\V(g_1)\cup \cdots \cup \V(g_m))+(-1)^d\mComp(g_1\cup \cdots \cup g_m)=(-1)^d.
\end{equation}
Expand $\chi(\V(g_1)\cup \cdots \cup \V(g_m))$ using \Cref{lem:inclusion-exclusion}
\begin{equation}\label{eq:counting1}
    \chi(\V(g_1)\cup \cdots \cup \V(g_m))= \sum_{i=1}^m \chi(\V(g_i))-\sum_{k=2}^m(-1)^k \sum_{S\in \binom{[m]}{k}}\chi(\bigcap_{i\in S}\V(g_i)).
\end{equation}
Apply Euler-Poincar\'{e} again,
\begin{equation}\label{eq:counting2}
    \sum_{i=1}^m \chi(\V(g_i))= m(-1)^d - (-1)^d\sum_{i=1}^m \mComp(g_i) = (-1)^d- \fComp(g_1, \cdots , g_m).
\end{equation}
Replacing the Euler characteristic in \eqref{eq:counting0} with $\fComp$ using \eqref{eq:counting1} and \eqref{eq:counting2} gives the result.
\end{proof}

 Recall that $\mComp(g_1\odot \cdots \odot g_m)$ is the number of regions the tropical hypersurfaces $\V(g_1)\cup\cdots \cup \V(g_m)$ cut $\R^d$ into (See \Cref{tab:meaning-of-complexity}). One should compare \Cref{thm:vertex-counting} to Theorem 5.5 in \cite{montufar2021sharp} (See \Cref{ex:compare-montufar}). There, the authors express the number of regions in an arrangement of tropical hypersurfaces as the sum of the M\"{o}bius function of each cell weighted by its Euler characteristic, an idea based on Zaslavsky's topological dissection theory. The two formulas are equivalent for tropical hypersurfaces, and one can derive one from the other by using Rota's Crosscut Theorem \cite[Theorem 3]{rota1964foundations}. \cref{thm:vertex-counting} is a regrouping of the terms in the formula provided by \cite{montufar2021sharp}. Any information about the complete intersection gives information about the number of regions. However, computing tropical intersections is in general hard, which has been studied previously in\cite{adiprasito2020note,adiprasito2014filtered,bertrand2007euler}.

\begin{ex}\label{ex:compare-montufar}
The following is an example in \cite{montufar2021sharp}. Their picture shows an arrangement of tropical curves in $\R^2$ and its intersection poset.

\begin{center}
    \includegraphics[width=3.6in]{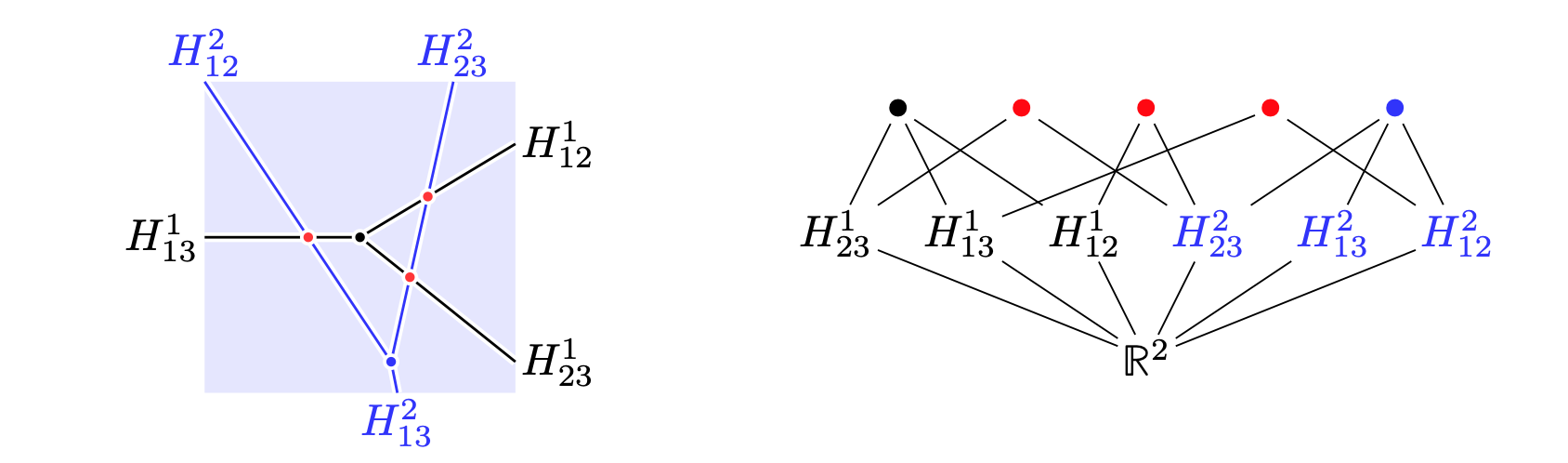}
\end{center}
Each tropical curve divides $\R^2$ into three regions. Their intersection is three points, having Euler characteristic 3. Theorem 1 then tells us the arrangement cuts $\R^2$ into $3+3-2+1+3=8$ regions. Using the formula given in \cite{montufar2021sharp}, we need to compute the M\"{o}bius function and the Euler characteristic of the intersections of the cells. $\mu(\blackdot)=\mu(\bluedot)=2$, $\mu(\reddot)=1$, $\mu(H_{ab}^i)=-1$, $\mu(\R^2)=1$. This yields 
\[r=1+0(-1-1-1-1-1-1)+(-1)^0(2+2+1+1+1)=8\]
regions.
\end{ex}

In machine learning, the number of linear regions a neural network has is a measure of its expressivity. Given a tropical rational function $\vp=g\oslash h$, one can associate a surrogate function $g\odot h$. The number of linear regions is bounded from above by the number of linear regions of $g\odot h$, which can be computed using \Cref{thm:vertex-counting}. This implies the following.

\begin{cor}\label{cor:rational-linear-region}
Let $\vp=g\oslash h$ be a tropical rational function  on $\R^d$. Then
\begin{equation}
    \text{ linear regions of }\vp\leq \mComp(g) + \mComp(h) + (-1)^{d+2}\chi(\V(g)\cap\V(h)). 
\end{equation}
\end{cor}

Now we prepare another ingredient for the proof of \Cref{thm:lower-bound}. Using Stratified Morse Theory, Adiprasito shows that if all the tropical hypersurfaces in a complete intersection are pointed, meaning that they divide the ambient space $\R^d$ into pointed polyhedra, then their intersection is homotopy Cohen-Macaulay (\cite{adiprasito2020note}, Theorem 1.1), which implies that the one-point compactification of the intersection is a wedge of spheres of the same dimension. Moreover, the bounded part of the intersection, meaning the subcomplex consisting of bounded polyhedra, is also homotopy equivalent to a wedge of spheres. We adopt his idea in our setting and obtain the following lemma. Since the proof is a rather long topological argument, we include it in full detail in the appendix for this paper \cite{wang2023topology}. 

\begin{lemma}\label{lem:topology-of-intersection} Let $X\subset \R^d$ be a complete intersection of generic tropical hypersurfaces.
The one-point compactification and the bounded part of $X$ are homotopy equivalent to wedges of $n$-spheres.
\end{lemma}

\begin{proof}
    By the main result in the appendix \cite{wang2023topology}, $X\cup\{pt\}$ and the bounded part of $X$ are $(n-1)$-connected. In the following, let $Y$ be either $X\cup \{pt\}$ or the bounded part of $X$. Let $\widetilde{H}_k(Y)$ be the $k$-th reduced homology group of $Y$ with integer coefficients and $\pi_k(Y)$ be its $k$-th homotopy group. By Hurewicz Theorem (see, e.g. \cite[Theorem 4.32]{MR1867354}),
    \begin{equation}\label{eq:hurewicz}
        \widetilde{H}_k(Y) = \begin{cases}
            0, & k< n, \\
            \pi_n(Y), & k=n.
        \end{cases}
    \end{equation}
    Since $Y$ is an $n$-dimensional finite CW-complex, $\widetilde{H}_n(Y)$ is freely generated by finitely many elements. By \eqref{eq:hurewicz}, $\pi_n(Y)$ is also freely generated by finitely many elements, say, $\gamma_1,...,\gamma_m$. Those generators give a map $\gamma_1\vee \cdots \vee \gamma_m$ from a wedge of $m$ $n$-spheres to $Y$, which induces isomorphisms on homology groups. By Whitehead's Theorem (see, e.g. \cite[Corollary 4.33]{MR1867354}), that map is a homotopy equivalence.
\end{proof}

\begin{lemma}\label{lem:euler-characteristic-recession}
Let $X\subset \R^d$ be a complete intersection of generic tropical hypersurfaces. Then
$\chi(\rec(X))\leq \chi(X)$ if $n$ is even, and $\chi(\rec(X))\geq \chi(X)$ if $n$ is odd.
\end{lemma}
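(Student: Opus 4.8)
The plan is to compare $\chi(X)$ and $\chi(\rec X)$ by passing through the \emph{spherical compactification} $\bar X$ of $X$, and then to feed in the topological input of \Cref{lem:topology-of-intersection} in a compactified form.

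First I would fix the bookkeeping. Write $\chi$ for the combinatorial Euler characteristic $\sum_\sigma(-1)^{\dim\sigma}$; it coincides with the compactly supported Euler characteristic $\chi_c$, hence is additive over decompositions into locally closed pieces ($\chi(Y)=\chi(Z)+\chi(Y\setminus Z)$ for $Z\subseteq Y$ closed) and agrees with the usual homotopy-invariant Euler characteristic on compact complexes. Compactify $\R^d$ to a closed ball $\R^d\cup S^{d-1}_\infty$, let $\bar X$ be the closure of $X$ in it, and set $S:=\bar X\cap S^{d-1}_\infty$, the spherical slice $\rec X\cap S^{d-1}$ of the recession fan. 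Then $\bar X$ is compact and additivity gives $\chi(\bar X)=\chi(X)+\chi(S)$. Since $\rec X$ is a polyhedral fan, the radial homeomorphism $\rec X\setminus\{0\}\cong S\times\R_{>0}$, together with $\chi_c(\R_{>0})=-1$ and additivity, gives $\chi(\rec X)=1-\chi(S)$. Eliminating $\chi(S)$ yields the single clean relation
\[\chi(\rec X)-\chi(X)=1-\chi(\bar X).\]

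The crux is the claim that $\bar X$ is homotopy equivalent to a wedge of $n$-spheres, where $n=\dim X$. Granting this, $\chi(\bar X)=1+(-1)^n t$ for some $t\ge 0$, so $\chi(\rec X)-\chi(X)=-(-1)^n t=(-1)^{n+1}t$, which is $\le 0$ for $n$ even and $\ge 0$ for $n$ odd --- exactly the assertion. (Only the weaker inequality $(-1)^n\bigl(\chi(\bar X)-1\bigr)\ge 0$ is actually needed, which already follows from $\bar X$ being homotopy Cohen--Macaulay of dimension $n$.) To prove the claim I would re-run the argument behind \Cref{lem:topology-of-intersection}: the closure $\bar X$ is the intersection, inside the compactification of $\R^d$, of the closures of the $\V(g_i)$, which stay pointed and in general position for generic parameters, so the Stratified and Smooth Morse Theory construction of \cite{adiprasito2020note} --- with the explicit Morse functions available in the generic case, exactly as in the proof of \Cref{lem:topology-of-intersection} --- builds $\bar X$ by attaching cells only in the top dimension $n$.

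I expect this last point to be the main obstacle: verifying that Adiprasito's double induction, on dimension and on the number of hypersurfaces, and the construction of the Morse function transfer verbatim to the compactified complex $\bar X$ rather than to $\R^d$ and its one-point compactification. One subtlety to watch is that distinct parallel maximal cells of $X$ become glued along a common cell of $S$ at infinity, so $\bar X$ is genuinely more connected at infinity than a collar neighborhood would suggest, and the Morse function near $S^{d-1}_\infty$ must be chosen compatibly with these identifications. Everything else --- the two $\chi_c$-identities and the final parity bookkeeping --- is routine.
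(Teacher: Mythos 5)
Your scaffolding --- reduce to a single Euler-characteristic identity and feed in \Cref{lem:topology-of-intersection} --- matches the paper's in spirit, but you have aimed the topological input at the wrong space and thereby created a gap where none is needed. \Cref{lem:topology-of-intersection} makes two assertions: the one-point compactification $\hat X$ is homotopy equivalent to a wedge of $n$-spheres, and so is the \emph{bounded} subcomplex $Y\subseteq X$. The paper uses the second. Its computation is $\chi(X\setminus Y)=\chi(\rec X)-1=\chi(X)-\chi(Y)$ (the first equality is where genericity of the exponents enters: unbounded cells of $X$ then biject, dimension-preservingly, with the nonzero cones of $\rec X$), hence $\chi(\rec X)-\chi(X)=1-\chi(Y)$; since $\chi(Y)=1+(-1)^n t$ with $t\ge 0$, the sign follows immediately.

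Your identity $\chi(\rec X)-\chi(X)=1-\chi(\bar X)$ is the same relation in disguise: $\chi(\bar X)=\chi(X)+\chi(S)$ while $\chi(S)=-\chi(X\setminus Y)$ by the same dimension-shifted bijection (cells of $S$ correspond to unbounded cells of $X$ with dimension dropped by one), so $\chi(\bar X)=\chi(Y)$; geometrically, $\bar X$ deformation-retracts onto $Y$ by pushing each unbounded cell $Q+C$ radially onto its bounded part $Q$. Consequently the claim you flag as the ``main obstacle'' --- that the \emph{spherical} compactification $\bar X$ is a wedge of $n$-spheres --- does not require re-running Adiprasito's Morse-theoretic induction in the compactified setting and managing the identifications at infinity; it is inherited from, and equivalent to, the statement about $Y$ that \Cref{lem:topology-of-intersection} already provides. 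As submitted, though, the argument is incomplete: it rests on an unproved assertion about $\bar X$, and it misses that the unused half of \Cref{lem:topology-of-intersection} closes it at once.
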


\begin{proof}
Let $Y$ be the subcomplex of $X$ consisting all the bounded polyhedra. Let $\rec(X)$ be the recession fan of $X$ (see \Cref{fig:recession} and the definition above it). Then $\chi(X\bs Y)=\chi(\rec(X))-1=\chi(X)-\chi(Y)$. If $Y$ is contractible, i.e. homotopy equivalent to an empty wedge of spheres, then $\chi(Y)=1$ and $\chi(\rec(X))=\chi(X)$. If $Y$ is homotopy equivalent to a non-empty wedge of $n$-spheres, then $\chi(Y)\geq 2$ if $n$ is even, and $\chi(Y)\leq 0$ if $n$ is odd, which implies the desired inequality.
\end{proof}

\begin{lemma}\label{lemma:euler-bound}
Let $X\subset \R^d$ be a complete intersection of generic tropical hypersurfaces. Then $\chi(X)\geq 1$ if $n$ is even, and $\chi(X)\leq -1$ if $n$ is odd. The equality holds if and only if $X$ is an affine subspace.
\end{lemma}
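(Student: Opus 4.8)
The plan is to extract $\chi(X)$ from \Cref{lem:topology-of-intersection} and then pin down the equality case by watching when the resulting wedge of spheres collapses to a single sphere. Recall that the combinatorial Euler characteristic used in this paper is additive over decompositions into relatively open cells, so it agrees with the compactly supported Euler characteristic; in particular, writing $X^{+}=X\sqcup\{\infty\}$ for the one-point compactification, $\chi(X^{+})=\chi(X)+1$, where $\chi(X^{+})$ is the ordinary Euler characteristic of the compact space $X^{+}$. By \Cref{lem:topology-of-intersection}, $X^{+}\simeq\bigvee_{k}S^{n}$ for some $k\ge 0$, so (as $\chi(\bigvee_{k}S^{n})=1+(-1)^{n}k$) we get $\chi(X)=(-1)^{n}k$. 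Since $X$ is nonempty (it has dimension $n$), $X^{+}$ is not a point, so $k\ge 1$; this is exactly the asserted inequality, and equality holds iff $k=1$, i.e.\ iff $X^{+}\simeq S^{n}$, which — a wedge of spheres being determined up to homotopy by its reduced homology — is the same as $\widetilde H_{*}(X^{+})\cong\widetilde H_{*}(S^{n})$.

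It remains to show $X^{+}\simeq S^{n}\Rightarrow X$ is an affine subspace; the converse is immediate, since an affine $n$-subspace is $\R^{n}$, whose compactification is $S^{n}$. I would induct on $n$. The case $n=0$ is clear: $X$ is then a finite point set and $\chi(X)=1$ forces a single point. For the inductive step, first replace $X$ by its recession fan: since the fans $\rec(\V(g_{i}))$ depend only on the generic exponent data, $\rec(X)=\bigcap_{i}\rec(\V(g_{i}))$ is again a generic complete intersection of dimension $n$, \Cref{lem:topology-of-intersection} applies to it, and \Cref{lem:euler-characteristic-recession} together with the first paragraph (applied to $\rec(X)$) forces $\chi(\rec(X))=(-1)^{n}$, i.e.\ $\rec(X)^{+}\simeq S^{n}$ as well. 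Using that, for a generic intersection, every unbounded maximal cell meets the recession fan in a full $n$-dimensional cone (inherited from tropical hypersurfaces, where unbounded faces are dual to boundary faces of the Newton polytope), one checks that once $\lvert\rec(X)\rvert$ is known to be a linear $n$-subspace $V$, each maximal cell of $X$ lies in a translate of $V$; by connectedness of $X$ (itself a consequence of $X^{+}\simeq S^{n}$) these translates agree, so $\lvert X\rvert$, being balanced and $n$-dimensional inside a translate of $V$, equals that translate. The step is thus reduced to the fan statement: \emph{a balanced fan $F$ of dimension $n$ arising as a generic intersection with $F^{+}\simeq S^{n}$ has linear support.}

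For the fan case, split off the lineality, $F=L\oplus\bar F$ with $\bar F$ a pointed balanced fan of dimension $\bar n=n-\dim L$; from $(\R^{\dim L}\times\bar F)^{+}\cong\Sigma^{\dim L}\bar F^{+}$ we get $\bar F^{+}\simeq S^{\bar n}$, and $\bar n=0$ gives $F=L$ linear. So assume $\bar n\ge 1$; writing $\bar F$ as the cone over its apex link identifies $\bar F^{+}$ with the suspension $\Sigma\operatorname{lk}_{\bar F}(0)$, whence $\operatorname{lk}_{\bar F}(0)$ is a balanced complex supported on a \emph{proper} subset of a sphere and having the reduced homology of $S^{\bar n-1}$. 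The step I expect to be the main obstacle is ruling this out — equivalently, showing that a nontrivial pointed balanced fan cannot have a sphere as its one-point compactification. One route is to carry the single-sphere conclusion all the way down the induction on dimension and number of hypersurfaces that underlies \Cref{lem:topology-of-intersection}: at each stage one passes to a link or a generic hyperplane slice, which is again a generic intersection, so \Cref{lem:topology-of-intersection} applies and "$\simeq S^{j}$" forces the next object to be "$\simeq S^{j-1}$"; this shows $X$ is a homology $n$-manifold, and then one concludes by the elementary fact that a connected balanced complex which is a homology manifold is flat — exactly two facets meet along each ridge, and the balancing condition makes them coplanar, so the support is an affine subspace. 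The delicate point, and where the argument must be executed carefully, is the bookkeeping that preserves genericity (hence the applicability of \Cref{lem:topology-of-intersection}) through all of these links and slices.
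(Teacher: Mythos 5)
Your first paragraph is correct and in fact sharper than the paper's phrasing: from \Cref{lem:topology-of-intersection}, $X^{+}\simeq\bigvee_k S^n$ gives $\chi(X)=(-1)^n k$ with $k\ge 1$ when $X\neq\emptyset$, and equality in the lemma is exactly $k=1$, i.e.\ $X^{+}\simeq S^n$. Your reduction to the fan case (pass to $\rec(X)$, use \Cref{lem:euler-characteristic-recession}) is also the same move the paper makes.

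The genuine gap is in the fan case, and it is one you flag yourself. You reduce to showing that a nontrivial pointed balanced fan cannot have $\bar F^{+}\simeq S^{\bar n}$, and propose to get there by showing $X$ is a homology manifold and then invoking ``balanced $+$ two facets per ridge $\Rightarrow$ flat.'' But the step from $X^{+}\simeq S^n$ to ``$X$ is a homology $n$-manifold'' does not follow from anything established: a space having the homotopy type of a sphere globally tells you nothing about the links of its points, and your proposed mechanism (re-run the Morse-theoretic induction of \Cref{lem:topology-of-intersection} and argue that links/slices are again generic intersections on which sphere-ness propagates) is a substantial new argument, not a corollary of the stated lemma — vertex links of a tropical complete intersection are star fans, not obviously generic intersections of the kind the lemma covers, and nothing forces the wedge count $k$ to stay equal to $1$ as you descend. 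By contrast, the paper avoids this entirely: for a fan $X$ of dimension $k+1$ it slices with a generic hyperplane $H$ through the base point, uses the cone structure to get $X\cap H^{>0}\cong X_1\times\R$ and $X\cap H^{<0}\cong X_2\times\R$ with $X_i=X\cap H_{\pm}$ and $X\cap H=\rec(X_1)$, so that
\[
\chi(X)=-\chi(X_1)-\chi(X_2)+\chi(\rec(X_1)),
\]
where $X_1,X_2,\rec(X_1)$ are generic intersections of dimension $k$. Combining the inductive hypothesis with \Cref{lem:euler-characteristic-recession} (which bounds $\chi(\rec(X_1))$ by $\chi(X_1)$ in the right direction) and the observation that at least one of $X_1,X_2$ is non-affine yields the strict inequality directly, with no appeal to manifold structure or to the homotopy type of links. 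I'd recommend replacing the suspension/homology-manifold portion of your fan case with this slicing argument; the rest of your write-up (including the compactification bookkeeping) is sound.
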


\begin{proof}
By \Cref{lem:topology-of-intersection}, the weak inequalities hold. We only need to show that strict inequalities hold when $X$ is not an affine subspace. For $n=0$, $X$ consists of finitely many points. $X$ is not an affine subspace if and only if $X$ contains more than one point, so the claim is obvious. Suppose the claim holds for $n\leq k$ and that $X$ has dimension $k+1$. Consider $\rec(X)$. If $X$ is not an affine subspace, neither is $\rec(X)$. By \Cref{lem:euler-characteristic-recession}, when $k+1$ is even, $\chi(\rec(X))\leq \chi(X)$, whereas when $k+1$ is odd, $\chi(\rec(X))\geq \chi(X)$. Therefore, it suffices to prove the claim for fans. Now assume $X$ is a fan. Consider the following construction. Intersect $X$ with a generic hyperplane $H$ that goes through the base point of $X$. We decompose $X$ into three disjoint sub-complexes $X\cap H$, $X\cap H^{>0}$ and $X\cap H^{<0}$, where $H^{>0}$ and $H^{<0}$ are the half-spaces defined by $H$. Let $H_+$ be any hyperplane parallel to $H$ that lives in $H^{>0}$ and $H_-$ be any hyperplane parallel to $H$ that lives in $H^{<0}$. Let $X_1=X\cap H_+$ and $X_2=X\cap H_-$. Then $X\cap H^{>0}$ is homeomorphic to $X_1\times \R$ and $X\cap H^{<0}$ is homeomorphic to $X_2
\times \R$. Therefore, $\chi(X\cap H^{>0})=-\chi(X_1),\chi(X\cap H^{<0})=-\chi(X_2)$. Moreover, $X\cap H=\rec(X_1)=\rec(X_2)$. Putting everything together, we get
\[\chi(X)=\chi(X\cap H^{>0})+\chi(X\cap H^{<0})+\chi(X\cap H)=-\chi(X_1)-\chi(X_2)+\chi(\rec(X_1)).\]
Note that $X_1,X_2$ and $\rec(X_1)$ are generic intersections of dimension $k$. Moreover, at least one of $X_1$ and $X_2$ is not an affine subspace. If $k$ is even, then 
\[-\chi(X_1)-\chi(X_2)+\chi(\rec(X_1)) \leq \min\{-\chi(X_1),-\chi(X_2)\} < -1,\]
while if $k$ is odd,
\[-\chi(X_1)-\chi(X_2)+\chi(\rec(X_1))\geq \max\{-\chi(X_1),-\chi(X_2)\}> 1.\]
This completes the proof.
\end{proof}

\begin{proof}[Proof of \Cref{thm:lower-bound}]
Since all the tropical hypersurfaces are in general position, any $k$ of them have empty intersection if $k\geq d+1$. By \Cref{lemma:euler-bound}, if $S$ has cardinality $k$, then
\[\chi(\bigcap_{i\in S} \V(g_i))\leq -1,\]
if $d-k$ is odd, and 
\[\chi(\bigcap_{i\in S} \V(g_i))\geq 1,\]
if $d-k$ is even. Therefore, we have
\[(-1)^{k+d}\chi(\bigcap_{i\in S} \V(g_i))\geq 1,\]
which yields the lower bound by \Cref{thm:vertex-counting}. By \Cref{lemma:euler-bound}, the equality holds if and only if all the intersections $\cap_{i\in S}\V(g_i)$ are affine subspaces, which happens if and only if $g_i$'s are all binomials.
\end{proof}

 \Cref{thm:lower-bound} shows that when the number of factors grows, the factorization complexity grows linearly while the monomial complexity grows in 
 $O(m^d)$. This also means with respect to factorization complexity, a minimal representation of $g$ must be an irreducible factorization of $g$, whenever all factors have generic parameters. However, we note that the converse is not true, as different irreducible factorizations may have different complexities, as is shown in the following example. Let $g=(x\oplus y)\odot(x\oplus 0)\odot (y\oplus 0)=(x\oplus y\oplus 0 )\odot (x\odot y \oplus x\oplus y)$. The first factorization has complexity 4, while the second has complexity 5. 
 
 \Cref{thm:lower-bound} has several interpretations. First, it says the number of regions in an arrangement of generic tropical hypersurfaces is no less than the number of regions in an arrangement of hyperplanes, and is strictly larger than the latter unless all the hypersurfaces are ordinary hyperplanes. Second, recall that the number of regions in an arrangement of tropical hypersurfaces corresponds to the number of vertices in the regular mixed subdivision of the Minkowski sum of the corresponding Newton polytopes. Therefore, \Cref{thm:lower-bound} is a lower bound in that setting. A special case is when the mixed subdivision is trivial, namely, when it doesn't really subdivide the Minkowski sum of the polytopes. This corresponds to arrangements of balanced polyhedral fans which are based at the same point. We have
 
 \begin{thm}\label{thm:lower-bound-polytope}
 Let $P_1,...,P_m\subset \R^d$ be polytopes in general position. Then 
 \begin{equation}
     \mComp(P_1+\cdots + P_m)\geq \sum_{i=1}^m\mComp(P_i)+2\sum_{k=0}^{d-1}\binom{m-1}{k}-2m.
 \end{equation}
 The equality holds if and only if all $P_i$'s are line segments.
 \end{thm}

 \begin{proof}
 Consider the corresponding tropical hypersurfaces of those polytopes. Consider the intersection of $k$ of them. If $k\leq d$, the argument is the same as in \Cref{thm:lower-bound}. If $k>d$, then their intersection is a single point. Applying \Cref{lemma:euler-bound} and \Cref{thm:vertex-counting}, one gets
\[\mComp(P_1+\cdots + P_m)\geq  \sum_{i=1}^m\mComp(P_i)+\sum_{k=2}^d\binom{m}{k}+\sum_{k=d+1}^m(-1)^{k+d}\binom{m}{k}=\sum_{i=1}^m\mComp(P_i)+2\sum_{k=0}^{d-1}\binom{m-1}{k}-2m.\]
All of their intersections are affine subspaces if and only if all $P_i$'s are line segments, which gives the condition for equality by \Cref{thm:lower-bound}.
\end{proof}

 \begin{rmk}
 In \Cref{thm:lower-bound-polytope}, if all $P_i$'s are polytopes of positive dimension, then $\mComp(P_i)\geq 2$ for all $i$, which implies
 \[\mComp(P_1+\cdots +P_m)\geq 2\sum_{k=0}^{d-1}\binom{m-1}{k}. \]
 This gives another proof for the Lower Bound Theorem by Adiprasito \cite[Corollary 8.2]{adiprasito2017lefschetz}, which says a Minkowski sum of polytopes of positive dimension in general position has at least the same number of vertices as the Minkowski sum of the same number of line segments in general position. When all the polytopes $P_1,...,P_m$ are full-dimensional, the precise lower bound obtained in \cite{adiprasito2017lefschetz} is
 \[\mComp(P_1+\cdots+P_m)\geq \sum_{i=1}^m\mComp(P_i)+\binom{d+m-1}{d-1}-d-m+1.\]
 For a fixed $d$, this bound is better than \Cref{thm:lower-bound-polytope} when $m$ is small. When $m$ is large, note that 
 \[\binom{d+m-1}{d-1}-d-m+1\sim \frac{m^{d-1}}{(d-1)!},\qquad 2\sum_{k=1}^{d-1}\binom{m-1}{k}-2m \sim \frac{2m^{d-1}}{(d-1)!}.\]
 Therefore, \Cref{thm:lower-bound-polytope} is a slightly better bound asymptotically.
 \end{rmk}
 
 \begin{rmk}
 In \cite{tseran2021expected} Theorem 8, Tseran and Mont\'{u}far obtain a lower bound for the number of activation regions for a maxout network. Their proof is based on the result in \cite{adiprasito2017lefschetz} and a lower bound on the number of strictly upper vertices of a Minkowski sum \cite[Theorem 6.9]{montufar2021sharp}. Their result can be re-obtained by applying \Cref{thm:lower-bound}. We also note that \Cref{thm:lower-bound} as well as \Cref{thm:lower-bound-polytope} is not true in general if $g_i$'s don't have generic parameters. For instance, let $h_1(x,y)=x\oplus y\oplus 0$ and $h_2(x,y)=x^{\odot 2}\oplus y\oplus 0$. Then $h_1\odot h_2 = x^{\odot 3}\oplus y^{\odot 2} \oplus x^{\odot 2}\odot y\oplus 0$. Note that $\fComp(h_1, h_2)=5$, but $\mComp(h_1\odot h_2)=4$.  
 \end{rmk}

\section{Minimal representation and minimal balancing}\label{sec:minimal-rep}

In this section we prove \Cref{prop:fan-unique} and \Cref{thm:piecewise-conewise}. Recall that a tropical polynomial $g$ is called reduced if it doesn't have any redundant monomials. We call a representation $\vp=g\oslash h$ \textit{reduced} if both $g$ and $h$ are reduced. We call a factorization $\vp=(g_1\odot \cdots \odot g_n)\oslash (h_1\odot \cdots \odot h_m)$ reduced if all of the factors are. The \textit{monomial complexity} (resp. \textit{factorization complexity}) of $\vp$ is the tuple $(\mComp(g),\mComp(h))$ (resp. $(\fComp(g_1,...,g_n),\fComp(h_1,...,h_m))$).
 We note that a minimal representation is necessarily reduced, whereas a reduced representation is not necessarily minimal. 

\begin{ex}\label{ex:reduced-not-minimal}
    Recall the function in \Cref{ex:rational_function}
 \[
    x\oplus y\oplus 0 - x\oplus y = x\odot y\oplus  x\oplus y\oplus 0 - x\odot y \oplus x\oplus y.
\]
Both sides are reduced and represent the same PL function, but the right hand side has monomial complexity $(4,3)$ while the left hand side has monomial complexity $(3,2)$. If we factor $x\odot y\oplus  x\oplus y\oplus 0 $ into $(x\oplus 0)\odot (y\oplus 0)$, then the right hand side has factorization complexity $(3,3)$. In whichever case, the left hand side is a shorter representation. In fact, it is the unique minimal representation w.r.t. either of the monomial complexity or the factorization complexity. In general, minimal representations are not unique, but they are when $d=1$.
\end{ex}

\begin{prop}\label{prop:minimal-unique-1-d}
Every tropical rational function in one variable has a unique minimal representation up to a linear function. Namely, if $\vp=g_1\oslash h_1$ and $\vp=g_2\oslash h_2$ are two minimal representations, then $g_1\oslash g_2=h_1\oslash h_2$ is a linear function. Moreover, a representation for $\vp$ is minimal if and only if its reduced.
\end{prop}

    \begin{proof}
    By \Cref{prop:two-notions-agree-in-1-d}, monomial complexity and factorization complexity agree when $d=1$. Suppose $\vp=g\oslash h$ is reduced. This means that $\V(\vp)=\V(g)\sqcup\V(h)$, $\V(g)$ is the positive part of $\V(\vp)$ while $\V(h)$ is the negative part of $\V(\vp)$. Therefore, $\mComp(g)=|\V(g)|+1$ and $\mComp(h)=|\V(h)|+1$. Note that this is the lower bound for the complexity of any representation. Hence, $g\oslash h$ must already be minimal. For uniqueness, suppose $g_1\oslash h_1$ and $g_2\oslash h_2$ are two minimal representations. Then $\V(g_1)=\V(g_2)$ as weighted polyhedral complex. This means $g_1\oslash g_2$ must be a linear function. \end{proof}
    
    In other words, \Cref{prop:minimal-unique-1-d} says \ref{itm:feature-2} holds for PL functions in one variable. Therefore, we can write down the minimal representation by looking at the zeros and poles: suppose we are given $\vp:\R \to \R$, knowing $\V(\vp)=\{x_1,...,x_n\}\sqcup \{y_1,...,y_m\}$, where $\vp$ is strictly convex at $x_i$'s and strictly concave at $y_i$'s. Let 
    
    \[\psi(x)=\frac{(x\oplus x_1)^{\odot a_1}\odot\cdots \odot (x\oplus x_m)^{\odot a_m} }{(x\oplus y_1)^{\odot b_1}\odot\cdots \odot (x\oplus y_n)^{\odot b_n}}, \]
    where $a_i$'s and $b_i$'s are the absolute value of the slope change of $\vp$ around each non-differentiable point. Then $\psi$ and $\vp$ differ by a linear function, which can be determined by computing two values of $\vp$.

   Due to the failuer of \ref{itm:feature-2}, minimal representation problems in higher dimensions are a lot more subtle. We make two reductions. The first reduction is to consider the corresponding minimal balancing problem. More precisely, a minimal representation problem takes the following form,
   \begin{equation}
        \begin{split}
           & \min_{g,h} \quad (\mComp(g),\mComp(h)), \\
            & s.t.\quad \vp=g\oslash h.
        \end{split}
    \end{equation}
Since finding a representation is the same as finding a balancing for $\V(\vp)_+$ (\Cref{lem:balancing-gives-rep}), one can instead consider the following optimization,      \begin{equation}
\begin{split}
    & \min_X \quad  \mComp(X), \\
    & s.t.\quad  X \text{ is a balancing of } \V(\vp)_+.    
\end{split}
\end{equation}
   
   The is the \textit{minimal balancing problem} for $\V(\vp)_+$. Similarly, one can also consider minimal balancing for $\V(\vp)_-$. A minimal balancing problem only has one value to optimize, thus is more feasible than solving a minimal representation directly. \Cref{fig:lattice} expresses the relations between the two problems schematically. Note that not all minimal representations come from minimal balancing.

   \begin{figure}[H]
      \centering
        \includegraphics[width=3.5in]{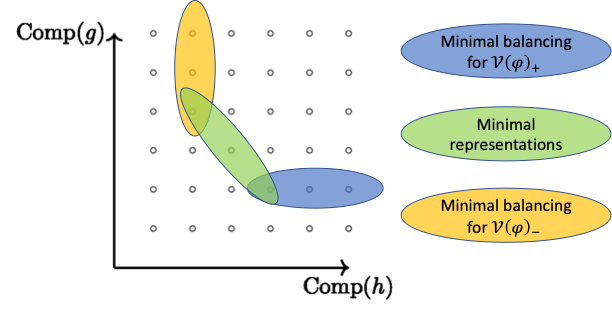}
        \caption{Relation between the minimal balancing problem and the minimal representation problem. Comp is either mComp or fComp.}
      \label{fig:lattice}
  \end{figure}

   Our second reduction is to study the minimal representation problem for conewise linear functions and 
   the corresponding balancing problem for polyhedral fans. Polyhedral complexes are locally polyhedral fans.
   If finding the minimal balancing for the whole complex is hard, one may resort to finding minimal balancing for local polyhedral fans first. This is supported by the following proposition, which will imply \Cref{thm:piecewise-conewise}. We remind the reader that for a tropical hypersurface $X$, $T_\sigma (X)$ is its tangent fan at the face $\sigma$ (see \Cref{fig:recession} and the definition above it).

 \begin{prop}\label{prop:decomp-into-fans}
 Let $X$ be a tropical hypersurface. Then
 \begin{equation} \label{eqn:decomp-into-fans}
     X=\sum_{ \substack {\tau \text{ a compact} \\ \text{face of $X$}}}(-1)^{\dim \sigma}T_\sigma(X). 
 \end{equation}
\end{prop}

\begin{proof}
Recall the conic decomposition for polytopes (a.k.a. Brianchon-Gram formula. See \cite{haase2005polar}),
    \begin{equation}
        \mathbbm{1}_P = \sum_{F \text{ a face of }P} (-1)^{\dim F}  \mathbbm{1}_{T_FP},
    \end{equation}
    where $T_FP$ is the tangent cone of $P$ at a face $F$ and $\mathbbm{1}$ denotes the indicator function. By applying the Minkowski-Weyl theorem, this formula can be extended to the following formula for not necessarily compact polyhedra,
\begin{equation}\label{eq:MW-noncompact}
        \mathbbm{1}_P = \sum_{F \text{ a compact face of }P} (-1)^{\dim F}  \mathbbm{1}_{T_FP}.
    \end{equation}
By \eqref{eq:MW-noncompact}, we deduce that for any $(d-1)$-face $\sigma$ of $X$,
\begin{equation}\label{eq:conic-applied}
    w(\sigma)\mathbbm{1}_\sigma = \sum_{ \substack {\tau \text{ a compact} \\ \text{face of $\sigma$}}} (-1)^{\dim \tau} w(\sigma)\mathbbm{1}_{T_\tau \sigma}.
\end{equation}
Summing up \eqref{eq:conic-applied} over all $(d-1)$-faces of $X$, we get
\begin{equation}\label{eq:sum-over-all-cells}
    \sum_{\sigma} w(\sigma)\mathbbm{1}_\sigma = \sum_{\sigma}\sum_{ \substack {\tau \text{ a compact} \\ \text{face of $\sigma$}}} (-1)^{\dim \tau} w(\sigma)\mathbbm{1}_{T_\tau \sigma} = \sum_{ \substack {\tau \text{ a compact} \\ \text{face of $X$}}}(-1)^{\dim \tau} \sum_{\sigma\supset \tau} w(\sigma)\mathbbm{1}_{T_\tau \sigma}.
\end{equation}
The second equality comes from exchanging the order of two summations. The whole equality \eqref{eq:sum-over-all-cells} is a reformulation of \eqref{eqn:decomp-into-fans} in terms of indicator functions, where the left-hand side is equal to $X$ and the right-hand side is equal to the alternating sum of tangent fans.
\end{proof}

\begin{proof}[Proof of \Cref{thm:piecewise-conewise}]
    It suffices to prove the statement for any tropical polynomial, since any piecewise-linear function is a difference of two tropical polynomials. By \Cref{prop:decomp-into-fans}, $\V(g)$ is a linear combination of tropical hypersurfaces that are fans for any tropical polynomial $g$. By \Cref{prop:equivalance} and \eqref{eqn:TRF-operation-correspondence}, $g$ is a linear combination of conewise linear functions. 
\end{proof}

\begin{rmk}
 \cref{prop:decomp-into-fans} suggests how to decompose a PL function into linear combination of conewise linear functions. Consider the convex PL function $g(x,y)=\max\{x,-x,y+1,-y\}$. The tropical hypersurface $X=\V(g)$ and its decomposition into linear combination of tangent fans are shown in \Cref{fig:decomp-into-conewise}. $X$ has three compact faces: two 0-dimensional faces $\tau_1$ and $\tau_2$ and one 1-dimensional face $\sigma$. Around $\tau_1$, $g$ agrees with the function $\max\{-x,y+1,-y\}$; around $\sigma$, it agrees with $\max\{y+1,-y\}$; around $\tau_2$, it agrees with $\max\{x,y+1,-y\}$. \cref{prop:decomp-into-fans} says 
 \begin{equation}
     g(x,y) = \max\{-x,y+1,-y\} - \max\{y+1,-y\} + \max\{x,y+1,-y\}.
 \end{equation}
\end{rmk}

\begin{figure}[H]
    \centering    \includegraphics[width=5.5in]{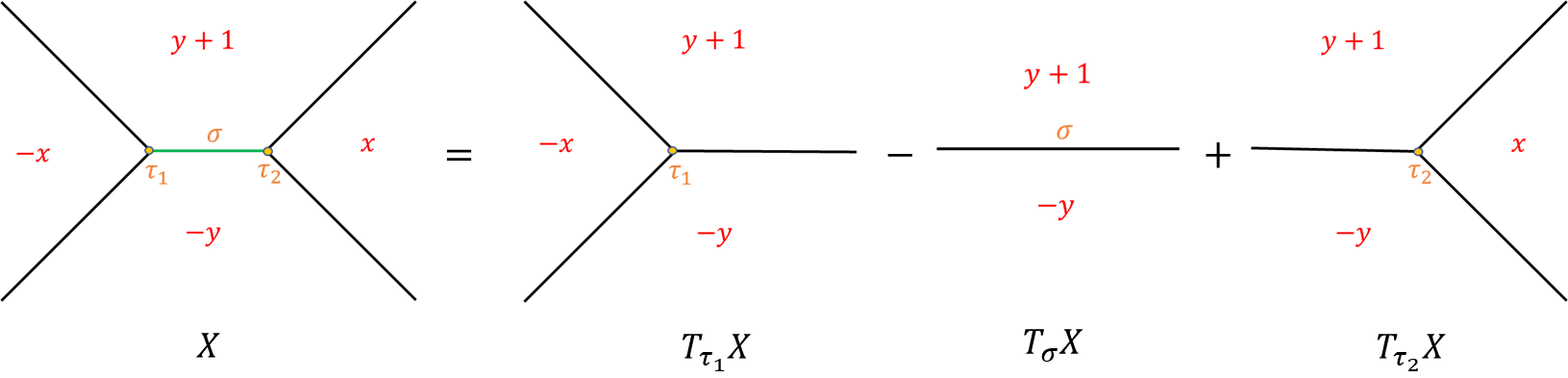} 
    \caption{Decomposition of a convex PL function into a linear combination of conewise linear functions.}
    \label{fig:decomp-into-conewise}
\end{figure}

\subsection{The story for monomial complexity}\label{sec:monomial-comp}

In this subsection, we study the minimal representation/balancing problem w.r.t. monomial complexity. We first observe the case when $\V(\vp)_+$ and $\V(\vp)_-$ are tropical hypersurfaces.

\begin{prop}\label{prop:when-positive-balanced}
       If $\V(\vp)_+$ is balanced, then then there is a unique minimal representation $g\oslash h$ where $\V(g)=\V(\vp)_+$ and $\V(h)=\V(\vp)_-$.
   \end{prop}
\begin{proof}
    Suppose $\V(g')$ balances $\V(\vp)_+$ and $\V(g')\neq \V(g)$. Then $\mComp(g')\geq \mComp(g)$. Moreover, $g'=g\odot r$ for some tropical polynomial $r$ that is not a linear function. The corresponding balancing for $\V(\vp)_-$ must be $\V(h)+\V(r)$. Note that $\V(r)$ and $\V(h)$ doesn't have the same support, so $\mComp(\V(h)+\V(r))>\mComp(\V(h))$, which means $g'\oslash(h\odot r)$ is not a minimal representation.
\end{proof}

Recall the notion of \textit{canonical balancing} defined in the introduction:
Suppose $\vp:\R^2\to \R$ is a conewise linear function. Then $\V(\vp)$ is a collection of rays with the same origin $\tau$. Suppose $\V(\vp)_+$ has rays $\sigma_1,...,\sigma_s$ with weights $w_1,...,w_s$, respectively, while $\V(\vp)_-$ has rays $\sigma_{s+1},...,\sigma_m$ with weights $-w_{s+1},...,-w_m$, respectively. For each $\sigma_k$, let $v_k$ be the unit vector generating $\sigma_k$. The balancing condition at $\tau$ says
\begin{equation}
\sum_{k=1}^s w_kv_k    =\sum_{k=s+1}^mw_kv_k.
\end{equation}

Let $\sigma_{m+1}$ be ray originated from $\tau$ generated by the vector $-\sum_{k=1}^s w_kv_k$. Note that $\sigma_{m+1}$ is empty if $\sum_{k=1}^s w_kv_k=0$. Set the weight on $\sigma_{m+1}$ to be $w_{m+1}=||\sum_{k=1}^s w_kv_k||$. Let $X=\V(\vp)_+\cup \sigma_{m+1}$ and $Y=\V(\vp)_- \cup \sigma_{m+1}$ with the specified weights. By construction, $X$ and $Y$ are balanced. We note two features of this construction. \begin{itemize}
    \item It is symmetric. Since one can choose to balance $\V(\vp)_+$ or $\V(\vp)_-$, a priori the representation obtained from the balancing depends on this choice. However, the representation obtained from above doesn't depend on the choice. In other words, the balancing described above can be simultaneously constructed for $\V(\vp)_+$ and $\V(\vp)_-$. Besides \Cref{ex:rational_function}, \Cref{fig:canonical-balancing} is another example.
    \item If $\V(\vp)_+$ is already balanced, this construction doesn't do anything.
\end{itemize}
In contrast, the associated arrangement defined in \Cref{sec:TRF} rarely satisfies the above properties. These two properties will be made precise at the end of this section. They are desirable properties for a balancing construction to be canonical. We therefore call the above construction that is now specific to PL functions on $\R^2$ the \textit{canonical balancing}. The resulting representation is called the \textit{canonical representation}. Besides satisfying the above properties, \Cref{prop:fan-unique} says that the canonical representation is minimal w.r.t. $\mComp$.

\begin{figure}[H]
    \centering
    \includegraphics[width=5in]{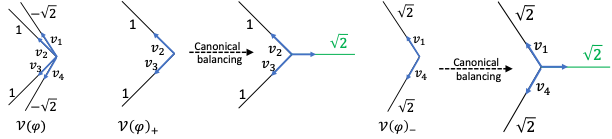}
    \caption{Canonical balancing can be constructed simultaneously for $\V(\vp)_+$ and $\V(\vp)_-$, giving the canonical representation.}
    \label{fig:canonical-balancing}
\end{figure}

\begin{proof}[Proof of \Cref{prop:fan-unique}]

    Let $g\oslash h$ be the canonical representation of $\vp$. 
   Suppose $\V(\vp)
_+$ consists of $m_1$ rays and $\V(\vp)_-$ consists of $m_2$ rays. There are three possibilities.
\begin{enumerate}
        \item $\mComp(g\oslash h)=(m_1,m_2)$. This happens if and only if $\V(\vp)_+$ is already balanced, in which case the canonical representation is the representation $\V(\vp)_+-\V(\vp)_-$. By \Cref{prop:when-positive-balanced}, it is minimal.

        \item $\mComp(g\oslash h) = (m_1+1,m_2)$. This happens if the new ray in the canonical balancing $\V(g)$ is in the support of $\V(\vp)_-$. In this case, $\V(\vp)_+$ is unbalanced. We need to show there isn't a representation $g'\oslash h'$ such that $\mComp(g'\oslash h')=(m_1,m_2)$. If such a $g'\oslash h'$ exists, then $\V(g')$ and $\V(\vp)_+$ have the same support, and $\V(h')$ and $\V(\vp)_-$ have the same support. This can only happen when $\V(\vp)_+$ is already balanced, a contradiction. By symmetry, this also works for the case $\mComp(g\oslash h)=(m_1,m_2+1)$.

        \item $\mComp(g\oslash h)= (m_1+1,m_2+1)$. It suffices to show that there isn't a representation $g'\oslash h'$ such that $\mComp(g'\oslash h')=(m_1,m_2+1)$. Suppose $\mComp(g')=m_1$.
        Since $\V(g')$ is not the canonical balancing for $\V(\vp)_+$, $\V(g')-\V(\vp)_+$ consists of at least two rays that are in the support of $\V(\vp)_+$. These two rays are not in the support of $\V(\vp)_-$, so $\V(h')=\V(g')-\V(\vp)_++\V(\vp)_-$ has monomial complexity at least $m_2+2$. By symmetry, we also showed that there isn't $g'\oslash h'$ such that $\mComp(g'\oslash h')=(m_1+1,m_2)$.
    \end{enumerate}
    
   This completes the proof.    
\end{proof}

For general $\vp:\R^2\to \R$, the representation obtained above may not be the unique minimal one, and the canonical balancing for $\V(\vp)_+$ is not necessarily minimal, as is already suggested in the proof. When $\V(\vp)_+$ or $\V(\vp)_-$ contains a balanced subcomplex, one can construct a balancing $\V(g')$ that has the same support as $\V(\vp)_+$ by increasing the weights for some rays of $\V(\vp)_+$.

Traveling from the land of tropical hypersurfaces to the land of polytopes, we get the following two corollaries, which will also be helpful for higher dimensional generalization of the canonical balancing. 
\begin{cor}\label{cor:polytope-version}
\begin{enumerate}
    \item If two polytopes $P,Q$ are transverse to each other, then $P-Q$ is the unique minimal representative for the virtual polytope it represents w.r.t. $\mComp$. 
    \item Any virtual polygon has a canonical representative $P-Q$, such that $P\subset P'$ and $Q\subset Q'$ (up to translation) whenever  $P'-Q'=P-Q$. Moreover, $P-Q$ is minimal w.r.t. $\mComp$.
\end{enumerate}
\end{cor}

\begin{proof}
    The first statement is just a rephrase of \Cref{prop:when-positive-balanced} in terms of polytopes. The second statement requires a little bit more work. Recall that a polygon and its normal fan satisfies the following relation.
Take the direction $v_i$ of each ray of the normal fan $X$, each having length the weight of that ray. Label all the vectors $v_1,...,v_n$ in the counterclockwise direction, meaning that if ray $\gamma$ has directional vector $v_i$ then the next ray in the counterclockwise direction is $v_{i+1}$. Consider the sequence of points $0,v_1,v_1+v_2,v_1+v_2+v_3,...,v_1+\cdots +v_{n-1}$. The balancing condition says $\sum v_i=0$. In other words, the vectors $v_1,...,v_n$ can enclose a polygon whose vertices are the points given above. Rotating this polygon counterclockwise by 90 degrees gives $P$ (see \Cref{fig:polytope-version}).

 If one starts with $\vp$ that corresponds to a virtual polygon $P-Q$, one can repeat the above construction with $\V(\vp)_+$. However, we can only get part of a polygon this way, if $\V(\vp)_+$ is unbalanced. One way to make it a polygon is to simply connect the starting point and the end point, and this is exactly the canonical balancing. It is clear that any other way of completing the edges into a polygon will give a polygon that contains $P$.
\end{proof} 

\begin{figure}[H]
    \centering
    \includegraphics[width=4.5in]{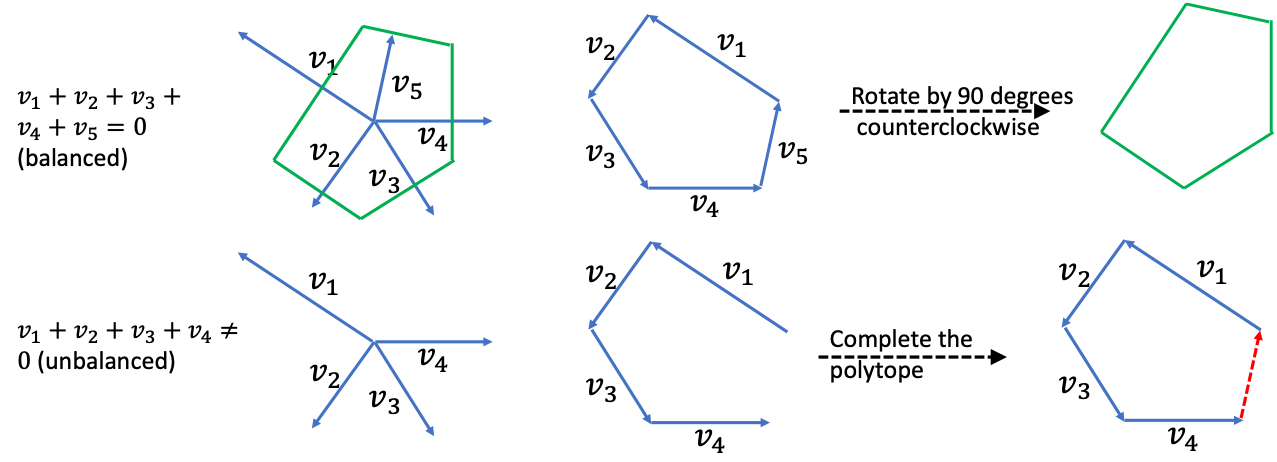}
    \caption{Relation between a polygon and its normal fan, and the polytopal description of the canonical balancing.}
    \label{fig:polytope-version}
\end{figure}

   Now we construct a potential generalization of canonical balancing to higher dimensions. Let $\vp:\R^d\to\R$ be a conewise linear function represented by some virtual polytope. Let $\tau$ be a $(d-2)$-cell of $\V(\vp)$. Suppose $\sigma_1,...,\sigma_n$ are the $(d-1)$-cells containing $\tau$ as a face. Then $\tau$ corresponds to a virtual 2-face (see the definition of virtual faces in \Cref{sec:TRF}) of $P-Q$ and $\sigma_1,...,\sigma_n$ correspond to its virtual edges. We know how to construct the canonical representative for this virtual 2-face, and the idea is to construct the canonical representatives for all 2-faces and take their convex hull. This procedure is broken down into the following steps.

Step 1: choose a $(d-2)$-cell $\tau_1$ from $\V(\vp)_+$. Suppose $\sigma_1,...,\sigma_n$ are the $(d-1)$-cells of $\V(\vp)_+$ containing $\tau_1$. The half-spaces spanned by $\sigma_1,...,\sigma_n$ form a 1-dimensional fan in $\R^d/\R \tau$. Let $P_1$ be the polygon that corresponds to the canonical balancing for this fan.
    
    Step 2: construct all such polygons $P_1,...,P_m$ for all the $(d-2)$-cells in a way that is compatible with the cell structure of $\V(\vp)_+$. Namely, if two $(d-2)$-cells $\tau_i$ and $\tau_j$ both are faces of $\sigma$, then $P_i$ and $P_j$ share an edge that is dual to $\sigma$. 
    
    Step 3: take the convex hull $P$ of $P_1,...,P_m$. Let $\V(g)$ be the tropical hypersurface corresponding to $P$. The \textit{canonical balancing} for $\V(\vp)_+$ is $\V(g)$.

\Cref{fig:algorithm} is an example of the above algorithm in dimension 3. The input is an unbalanced fan $X$, which is part of the normal fan of the tetrahedron, and can be through of as $\V(\vp)_+$ for
\begin{equation}
    \vp(x,y,z) = x^{\odot -1} \oplus x^{\odot 1} \oplus y^{\odot 2} \oplus y^{\odot 1}\odot z^{\odot -1} - x^{\odot -1} \oplus x^{\odot 1} \oplus y^{\odot 2}.
\end{equation}
It consists of three rays and three 2-cells. It is unbalanced around the three rays, so we perform the canonical balancing at each ray $\tau_1,\tau_2,\tau_3$.
 For each $\tau_i$, we get a 2-face $P_i$. Gluing those 2-faces together and take the convex hull, we get a tetrahedron, whose normal fan balances $X$.
    
    \begin{figure}[H]
        \centering
        \includegraphics[width=5.5in]{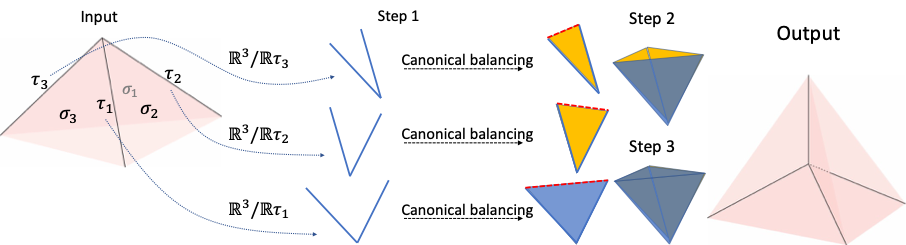}
        \caption{Given an unbalanced fan, the algorithm outputs the normal fan of the tetrahedron.}
        \label{fig:algorithm}
    \end{figure}

The fact that one can define a canonical representative for virtual polytopes in $\R^2$ should not be taken for granted. It might be special for dimension 2. It relies on the fact that in dimension 2, Minkowski addition is equivalent to \textit{Blaschke addition} \cite[Section 15.3]{grunbaum1967convex}. By a theorem by Minkowski, a $d$-dimensional polytope $P$ in $\R^d$ is equivalent to a collection of vectors that sum up to zero (called an equilibrated system \cite[Page 332]{grunbaum1967convex}). Each vector is normal to a facet of $P$, and the length of each vector is the $(d-1)$-dimensional volume of the corresponding facet. Since superposing two equilibrated systems produces another equilibrated system, this defines an addition on the set of polytopes. This addition, in general, is drastically different from Minkowski sum, but happens to be the same as the Minkowski sum for polygons. This equivalence is implicit in the construction of the canonical representation. It is reminiscent of the fact that, for algebraic surfaces, curves and divisors agree. Therefore, we hesitate to call the above construction a valid generalization. The main question to address is the following. Regard the above construction as a map $\mathcal{B}$ where
\begin{equation}
    \mathcal{B}: \{\V(\vp)_+\mid \vp \text{ a PL function}\} \to \{\text{representations of }\vp\mid \vp \text{ a PL function}\},\quad \V(\vp)_+\mapsto g\oslash h.
\end{equation}
For $\mathcal{B}$ to live up to being ``canonical", we'd like it to satisfy the following two properties. 
\begin{itemize}
    \item (Normality) If $\V(\vp)_+$ is already balanced, then $\mathcal{B}(\V(\vp)_+)=g\oslash h$ where $\V(g)=\V(\vp)_+$. Namely, $\mathcal{B}$ doesn't do anything to those $\vp$ whose $\V(\vp)_+$ is already balanced.
    \item (Symmetry) $\mathcal{B}(\V(\vp)_+) = \mathcal{B}(\V(\vp)_-)$. Namely, balancing $\V(\vp)_+$ and balancing $\V(\vp)_-$ via $\mathcal{B}$ gives the same representation. In other words, the balancing defined by $\mathcal{B}$ is obtained for $\V(\vp)_+$ and $\V(\vp)_-$ simultaneously.
\end{itemize}
In $\R^2$, $\mathcal{B}$ is both normal and symmetric, whereas the associated arrangement is neither. In higher dimensions, $\mathcal{B}$ is normal, and we don't know whether or not it is symmetric.

\subsection{The story for factorization complexity}

In this subsection, we study the minimal balancing problem w.r.t. factorization complexity. The notion of factorization complexity has a local nature: on one hand, it's the sum of the contribution of each factors. On the other hand, the minimal balancing problem may also be approached by locally by computing the minimal balancing of subcomplexes and then combine all the local solutions. One feature of factorization complexity is that global optimality implies local optimality, made precise by the following proposition.

\begin{prop}\label{prop:properties-of-factorization-len}
Let $X=\V(\vp)_+$ for some TRF $\vp$. If $g_1\odot\cdots \odot g_m$ minimizes factorization complexity for $X$, then any factor $g_{n_1}\odot\cdots\odot g_{n_k}$ minimizes factorization complexity for $X\cap \V(g_{n_1}\odot\cdots \odot g_{n_k})$.
\end{prop}

\begin{proof}

Suppose for a contradiction that some factor, which by relabelling the indices we may assume to be $g_1\odot\cdots \odot g_k$, does not balance $X\cap \V(g_1\odot\cdots\odot g_k)$ minimally. Then there is some other balancing $f_1\odot \cdots \odot f_s$ having smaller factorization complexity. Note that $f_1\odot\cdots\odot f_s \odot g_{k+1}\odot \cdots \odot g_m$ is also a balancing of $X$. We compute 
\begin{align*}
    \fComp(f_1,\cdots, f_s , g_{k+1}, \cdots \, g_m) & =\fComp(f_1,\cdots,f_s )+\fComp(g_{k+1}, \cdots , g_m)-1 \\
    & <\fComp(g_1,\cdots, g_k )+\fComp(g_{k+1},\cdots , g_m)-1 \\
    & = \fComp(g_1,\cdots,g_m).
\end{align*}
This contradicts to the minimality of $g_1\odot\cdots \odot g_m$.
\end{proof}

To see how monomial complexity fails to have the above property, consider the example shown in \Cref{fig:total-example}. Suppose the blue fan $X_2$ is already balanced and the black fan $X_1$ is unbalanced. By \Cref{thm:vertex-counting}, the number of intersection points contribute positively to the number of regions. To minimize the monomial complexity, one should create as few intersection points as possible. If we find the minimal balancing for $X_1$ and $X_2$ separately, we get the middle picture, which is not minimal for $X_1+X_2$. On the other hand, if we find the minimal balancing for $X_1+X_2$, we get $\overline{X_1}+\overline{X_2}$ in the third picture, which is not minimal for $X_1$ alone.

\begin{figure}[H]
    \centering
        \includegraphics[width=4in]{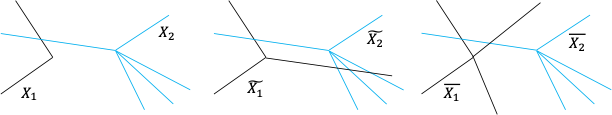}
    \caption{Balancing the union of $X_1+X_2$ (left) in two ways. Local minimization yields the picture in the middle, while global minimization yields the picture on the right.}
    \label{fig:total-example}
\end{figure}

So far, we've only dealt with a single unbalanced fan. The following proposition concerns a collection of unbalanced fans in general positions, in which case local optimality implies global optimality, providing a partial converse to \Cref{prop:properties-of-factorization-len}.

\begin{prop}\label{prop:generic-fans}
Suppose $\vp_1,...,\vp_m\subset \R^2$ are conewise linear functions whose parameters are drawn from some continuous probability distribution, and that $X_1=\V(\vp_1)_+,...,X_m=\V(\vp_m)_+$. Suppose
$\V_1,...,\V_m$ balance $X_1,...,X_m$ minimally w.r.t. $\fComp$, respectively. Then $\V_1+\cdots + \V_m$ balances $X_1+\cdots+ X_m$ minimally, with respect to $\fComp$.
\end{prop}

\begin{proof}
 Suppose $X_k$ contains $r_k$ rays. Suppose $\W_1\cup\cdots\cup \W_n$ balances $X_1\cup \cdots\cup X_m$.
Then $\rec(\W_1+\cdots + \W_n)=\rec(\W_1)+\cdots +\rec(\W_n)$ balances $\rec(X_1+ \cdots + X_m)$. Since each $\rec(\W_i)$ balances $\rec(\W_i) \cap \rec(X_1+ \cdots + X_m)$, the number of rays in $\rec(\W_i)$ is at least the number of rays in $\rec(\W_i) \cap \rec(X_1+ \cdots + X_m)$ plus 1. Therefore, 
\[\fComp(\rec(\W_1\+\cdots+\W_n))\geq \sum_{k=1}^mr_k+1=\fComp(\V_1+\cdots+\V_m).\]
Since $\fComp(\W_1+\cdots+ \W_n)\geq \fComp(\rec(\W_1+\cdots+\W_n))$, we have
\[\fComp(\W_1+\cdots+ \W_n)\geq \fComp(\V_1+\cdots+\V_m).\]
Therefore, $\V_1+\cdots+ \V_m$ is a minimal balancing for $X_1+\cdots+ X_m$.
\end{proof}

Now we turn to another perspective of the balancing problem. Instead of seeking for a balancing of minimal complexity, one may be satisfied with one that has small complexity. Recall that the associated arrangement $\A_X$ is the union of all hyperplanes spanned by the $(d-1)$-cells of $X$. \Cref{prop:lower-bound-balancing} shows that, in $\R^2$, $\A_X$ has small factorization complexity.

\begin{proof}[Proof of \Cref{prop:lower-bound-balancing}]
We first consider the case when the minimal balancing of $X$ is $\V(g)$ for some irreducible tropical polynomial $g$. In that case, the number of hyperplanes in $\A_X$ is no more than the 1-cells in $\V(g)$.  The number of 1-cells in $\V(g)$ is the number of edges in the regular subdivision of $\Delta(g)$ induced by $\Delta^\uparrow(g)$ and $\mComp(g)$ is the number of vertices. Let $l$ be the number of 2-cells in this subdivision. By the Euler-Poincar\'{e} relation, \[\fComp(\A_X)= \mComp(g) + l.\]
Since every 2-cell has at least three 1-cells in its boundary, and every 1-cell is in the boundary of at most two 2-cells, we have
\[2(\fComp(\A_X)-1)\geq 3l.\]
From there we get
\[\fComp(\A_X)\leq 3\mComp(g)-2.\]
Suppose the minimal balancing of $X$ is given by a factorization $g_1\odot\cdots\odot g_m$. Each $g_i$, by \Cref{prop:properties-of-factorization-len}, balances $\V(g_i)\cap X$ minimally. Set $X_i=\V(g_i)\cap X$. Then for each $i$ we have
\[\fComp(\A_{X_i})\leq 3\mComp(g_i)-2.\]
Therefore,

\[ \frac{\fComp(\A_X)}{\fComp(g_1,\cdots, g_m)} \leq\frac{\sum_{i}\fComp(\A_{X_i})-(m-1)}{\sum_{i}\mComp(g_i)-(m-1)} \leq \frac{\sum_{i}3\mComp(g_1)-3m+1}{\sum_{i}\mComp(g_i)-(m-1)}\leq 3.  \]
\end{proof}

\section{Conclusion}\label{sec:conclusion}

We studied the complexity of representing PL functions as tropical rational functions and the representations with minimal complexity. We expect the content to be of interest for both discrete geometers and statisticians, so we conclude the paper with open questions in both directions.

In \Cref{sec:monomial-comp} we defined a potential generalization of the canonical balancing to higher dimensions. We don't know if this generalization is really canonical in higher dimensions, in the sense that it is both normal and symmetric. We are pessimistic, since many facts about polytopes are special in dimension 2. However, we see no obvious obstruction for $\mathcal{B}$ to be symmetric. 

\begin{opq}
    Is the balancing $\mathcal{B}$ defined in \Cref{sec:monomial-comp} symmetric? Does it give minimal representation w.r.t. $\mComp$?
\end{opq}

Another question not clear to us is whether or not tropical polynomials have unique irreducible factorizations with minimal factorization complexity. Tropical polynomials don't have unique irreducible factorizations. However, do we get the uniqueness if we only consider those with minimal factorization complexity? Note that the answer is negative if we drop the word "irreducible".

\begin{opq} Do tropical polynomials have unique irreducible factorization minimal w.r.t. the factorization complexity?
\end{opq}

In general, we wonder how to turn
 the results in this paper into concrete statistical applications. In particular, since \Cref{thm:lower-bound} suggests that the number of parameters can be reduced using factorization, does this give any advantage in practice? According to \Cref{thm:piecewise-conewise}, any piecewise-linear function is a linear combination of local conewise linear function. Is a linear spline model consisting of local conewise linear models better than a global piecewise-linear model? Does it have any implication for neural network initialization?

\begin{opq}
    Does factorization improve statistical performance?
\end{opq}

\begin{opq}
    What is the statistical implication of \Cref{thm:piecewise-conewise}?
\end{opq}

\printbibliography

\end{document}